\documentclass[11pt,a4paper]{article}
\usepackage{aligned-overset}
\usepackage{amsmath,amsthm}
\usepackage{authblk}
\usepackage[style=numeric-comp,maxbibnames=100,bibencoding=utf8,giveninits=true,backend=bibtex]{biblatex}
\usepackage{booktabs}
\usepackage{cancel}
\usepackage{comment}
\usepackage[hmargin={26mm,26mm},vmargin={30mm,35mm}]{geometry}
\usepackage{hyperref}
\usepackage{newtxtext}
\usepackage{newtxmath}
\usepackage{pgfplots,pgfplotstable}
\usepackage{amsfonts,latexsym,graphicx,verbatim,subcaption}
\usepackage{fancyvrb}
\usetikzlibrary{calc,external}

\bibliography{hho-friedrichs}

\newcommand{\email}[1]{\href{mailto:#1}{#1}}

\usepackage[normalem]{ulem}
\newcounter{corr}
\definecolor{violet}{rgb}{0.580,0.,0.827}
\newcommand{\corr}[3]{\typeout{Warning : a correction remains in page \thepage}
  \stepcounter{corr}
 	      {\color{blue}\ifmmode\text{\,\sout{\ensuremath{#1}}\,}\else\sout{#1}\fi}
              {\color{red}#2}
              {\color{violet} #3}
}

\theoremstyle{plain}
\newtheorem{theorem}{Theorem}
\newtheorem{proposition}[theorem]{Proposition}
\newtheorem{lemma}[theorem]{Lemma}
\newtheorem{corollary}[theorem]{Corollary}

\theoremstyle{remark}
\newtheorem{remark}[theorem]{Remark}

\theoremstyle{definition}
\newtheorem{assumption}{Assumption}

\newcommand{\Real}{\mathbb{R}}
\newcommand{\Complex}{\mathbb{C}}

\newcommand{\Id}[1]{\mathcal{I}_{#1}}

\newcommand{\trans}{\mathsf{T}}
\newcommand{\herm}{\mathsf{H}}

\DeclareMathOperator{\Ker}{Ker}

\newcommand{\st}{\;:\;}

\newcommand{\dirichlet}{\mathrm{D}}
\newcommand{\neumann}{\mathrm{N}}

\newcommand{\Th}{\mathcal{T}_h}
\newcommand{\Fh}{\mathcal{F}_h}
\newcommand{\Fhb}{\mathcal{F}_h^{\rm b}}
\newcommand{\Fhi}{\mathcal{F}_h^{\rm i}}
\newcommand{\FT}{\mathcal{F}_T}

\newcommand{\Poly}[1]{\mathcal{P}^{#1}}

\newcommand{\lproj}[2]{\pi^{#1}_{#2}}

\newcommand{\Uh}{\underline{U}_h^k}
\newcommand{\UT}{\underline{U}_T^k}

\newcommand{\Ih}{\underline{I}_h^k}

\newcommand{\norm}[2]{\|#2\|_{#1}}
\newcommand{\seminorm}[2]{|#2|_{#1}}

\newcommand{\tnorm}[2]{|\kern-0.25ex|\kern-0.25ex|#2|\kern-0.25ex|\kern-0.25ex|_{#1}}

\newcommand{\term}{\mathfrak{T}}

\newcommand{\Err}{\mathcal{E}_h}

\newcommand{\huline}[1]{\widehat{\underline{#1}}}

\newcommand{\Aref}[1][T]{\mathfrak{a}_{#1}}

\newcommand{\Cref}[1][T]{\mathfrak{c}_{#1}}
\newcommand{\tref}{\mathfrak{t}_T}

\newcommand{\Kmin}{\mathfrak{r}_\flat}
\newcommand{\Kmax}{\mathfrak{r}_\sharp}

\newcommand{\symm}{\mathrm{s}}

\newcommand{\Rm}{\mathfrak{Rm}}

\newcommand{\logLogSlopeTriangle}[5]{
  \pgfplotsextra
      {
        \pgfkeysgetvalue{/pgfplots/xmin}{\xmin}
        \pgfkeysgetvalue{/pgfplots/xmax}{\xmax}
        \pgfkeysgetvalue{/pgfplots/ymin}{\ymin}
        \pgfkeysgetvalue{/pgfplots/ymax}{\ymax}

        \pgfmathsetmacro{\xArel}{#1}
        \pgfmathsetmacro{\yArel}{#3}
        \pgfmathsetmacro{\xBrel}{#1-#2}
        \pgfmathsetmacro{\yBrel}{\yArel}
        \pgfmathsetmacro{\xCrel}{\xArel}

        \pgfmathsetmacro{\lnxB}{\xmin*(1-(#1-#2))+\xmax*(#1-#2)} 
        \pgfmathsetmacro{\lnxA}{\xmin*(1-#1)+\xmax*#1} 
        \pgfmathsetmacro{\lnyA}{\ymin*(1-#3)+\ymax*#3} 
        \pgfmathsetmacro{\lnyC}{\lnyA+#4*(\lnxA-\lnxB)}
        \pgfmathsetmacro{\yCrel}{\lnyC-\ymin)/(\ymax-\ymin)}

        \coordinate (A) at (rel axis cs:\xArel,\yArel);
        \coordinate (B) at (rel axis cs:\xBrel,\yBrel);
        \coordinate (C) at (rel axis cs:\xCrel,\yCrel);

        \draw[#5]   (A)-- node[pos=0.5,anchor=north] {\scriptsize{1}}
        (B)--
        (C)-- node[pos=0.,anchor=west] {\scriptsize{#4}} 
        cycle;
      }
}

\begin{document}

\title{Hybrid methods for Friedrichs systems with application to scalar and vector diffusion-advection-reaction}
\author[1]{Daniele A. Di Pietro}
\author[1]{Aurelio E. Spadotto}
\affil[1]{%
  Universit\'e de Montpellier,
  place Eug\`ene Bataillon, 34090, Montpellier, France \\
  \email{daniele.di-pietro@umontpellier.fr},
  \email{aurelio-edoardo.spadotto@umontpellier.fr}
}
\maketitle

\begin{abstract}
  In this work we study arbitrary-order hybrid discretizations of Friedrichs systems.
  Friedrichs systems provide a framework that goes beyond the standard classification of partial differential equations into hyperbolic or elliptic, and are thus particularly suited for problems that include both diffusive and advective terms.
  The family of numerical schemes proposed in this work hinge on hybrid spaces with unknowns located at elements and faces.
  They support general meshes, are locally conservative and, compared with traditional Discontinuous Galerkin discretizations, lead to smaller algebraic systems once static condensation has been applied.
  We carry out a complete stability and convergence analysis, which appears to be the first of its kind.
  The performance of the method is illustrated on scalar and vector three-dimensional diffusion-advection-reaction problems.%
  \smallskip\\
  \textbf{MSC2020:} %
  65N30, 
  65N08, 
  35F15  
  \smallskip\\
  \textbf{Key words:} Friedrichs systems, Hybrid High-Order methods, hybridizable discontinuous Galerkin methods, polytopal methods, diffusion-advection-reaction
\end{abstract}



\section{Introduction}

In this work we carry out what appears to be the first complete theoretical study of arbitrary-order hybrid discretizations of Friedrichs systems and showcase their performance on scalar and vector diffusion-advection-reaction problems.
The proposed family of methods support general meshes, are locally conservative, and, compared with classical Discontinuous Galerkin (DG) methods, lead to smaller algebraic problems after static condensation of the element unknowns.
\smallskip

Relevant partial differential equations (PDEs) can exhibit elliptic or hyperbolic behaviours depending on the value of the physical parameters.
A prototypical example is provided by diffusion-advection-reaction and its vanishing diffusion limit.
Weak formulations and robust numerical discretizations for such PDEs are often not straightforward to derive.
A significant advance in this respect was made by Friedrichs, who introduced in~\cite{Friedrichs:58} a framework to treat in a unified way elliptic and hyperbolic problems.
The key idea consists in recasting the PDE as a system of first-order equations satisfying symmetry and positivity conditions, which is possible for a variety of models in continuum mechanics and electromagnetism; see, e.g.,~\cite[Section~56.2]{Ern.Guermond:21*2}.
Deriving weak formulations of Friedrichs systems involves, however, some technicalities.
In~\cite{Ern.Guermond:06}, the authors introduce a weak formulation in graph spaces with boundary conditions enforced through (positive) boundary operators that do not necessarily admit a trace representation.
Such formulation is amenable to the analysis in the spirit of the Banach--Ne\v{c}as--Babu\v{s}ka Lemma (see~\cite[Chapter~56]{Ern.Guermond:21*2}), and will be the starting point of the present work.
\smallskip

Various approaches have been considered in the literature for the numerical approximation of Friedrichs systems since the late 1990s.
A first analysis of finite volume methods for unsteady Friedrichs systems can be found in~\cite{Vila.Villedieu:97}, where the authors show convergence in $h^{\frac12}$ (with $h$ denoting the meshsize) under a hyperbolic CFL condition; see also~\cite{Jovanovic.Rohde:05} for an extension of these results to weak solutions and a posteriori error estimates.
Discontinuous Galerkin methods were considered in~\cite{Ern.Guermond:06}, using the techniques developed for the imposition of boundary conditions as a paradigm for the weak enforcement of interface conditions.
The same authors also discussed enhanced DG methods for Friedrichs systems with two- or multi-field structure in~\cite{Ern.Guermond:06*1,Ern.Guermond:08}.
A related contribution is~\cite{Di-Pietro.Ern.ea:08} which, taking inspiration from the theory of two-fields Friedrichs systems, introduced a DG method for variable diffusion with advection that is fully robust for locally vanishing diffusion.
Discontinuous Galerkin methods for Friedrichs systems in graph spaces were also considered in~\cite{Jensen:06}, while %
discontinuous Petrov--Galerkin methods were proposed and analyzed in~\cite{Bui-Thanh.Demkowicz.ea:13}.
More recently, $hp$-adaptive hybridizable DG schemes for Friedrichs systems with one- and two-fields structure have been proposed and numerically demonstrated in~\cite{Chen.Kang.ea:24}, which also contains a coercivity study.
A side result of our analysis are inf-sup stability and convergence results for the $h$-version of the one-field scheme.
Space-time DG methods were considered in~\cite{Corallo.Dorfler.ea:23}, while~\cite{Burman.Ern:07} focuses on stabilized continuous finite element approximations.
Finally, the series of papers~\cite{Buet.Despres.ea:15,Despres.Buet:16,Morel.Buet.ea:18} addresses various aspects related to the development of asymptotic-preserving and well-balanced schemes for unsteady Friedrichs systems.
\smallskip

In this work we carry out what appears to be the first comprehensive analysis of hybrid approximations of Friedrichs systems on general polytopal meshes.
Hybrid methods are based on unknowns that are local polynomials on mesh elements and faces.
Among the most prominent examples, we cite here hybridizable DG~\cite{Cockburn.Gopalakrishnan.ea:09} methods, built around the notion of numerical flux, and Hybrid High-Order methods~\cite{Di-Pietro.Ern:15,Di-Pietro.Ern.ea:14}, based on the notion of local reconstructions; see~\cite{Cockburn.Di-Pietro.ea:16} and also~\cite[Section~5.1.6]{Di-Pietro.Droniou:20} for a study of the relations between these two methods.
The scheme we propose and study here can be regarded as a generalization of the face-element upwind discretization of advective terms considered in~\cite{Di-Pietro.Droniou.ea:15}.
Our analysis hinges on inf-sup stability, which we prove by taking a suitable projection of the directional derivative as a test function.
Denoting by $k \ge 0$ the polynomial degree of the hybrid space, this stability result is instrumental in proving error estimates of order $h^{k+\frac12}$ for the error, which includes an $L^2$-like norm of the solution components and of the (discrete) directional derivative.
Closely following~\cite{Ern.Guermond:21*2}, we carefully track the dependence on the model parameters, which makes it possible to uncover improved pre-asymptotic convergence in $h^{k+1}$ when reaction dominates.
The performance of the proposed method is showcased on various diffusion-advection-reaction models.
We consider, in particular, the vector case corresponding to magnetic diffusion-advection-reaction, which we preliminarily show to fit within the Friedrichs framework.
Stabilized methods for this problem were developed in~\cite{Wang.Wu:24,Wang.Wu:24*1,Luo.Wang.ea:26}.
A scheme supporting general polytopal meshes for magnetic advection-reaction has also been proposed in~\cite{Cantin.Ern:17}.
The performance of the method is numerically demonstrated on three-dimensional tests including, in particular, a physical benchmark inspired by~\cite{Perry.Jones:78} and describing the expulsion of the magnetic field from a rotating cilinder.
\smallskip

The rest of this work is organized as follows.
The continuous and discrete settings are respectively discussed in Sections~\ref{sec:continuous.setting} and~\ref{sec:discrete.setting}.
Section~\ref{sec:scheme} contains the statement of the discrete problem.
Complete stability and error analyses are carried out in Section~\ref{sec:analysis}.
Finally, Section~\ref{sec:numerical.tests} contains the numerical validation of the method.


\section{Continuous setting}\label{sec:continuous.setting}

\subsection{Friedrichs systems}

We briefly recall the notion of Friedrichs system.
The setting and notation are closely inspired by~\cite[Chapter~1]{Ern.Guermond:21*2}.
Denote by $\Omega$ a bounded polytopal domain of $\Real^d$, $d \ge 1$, with boundary $\partial \Omega$.
Given an integer $m > 0$, let $\mathcal{K} \in L^\infty(\Omega;\Complex^{m\times m})$
and let $\mathcal{A} \coloneqq \{\mathcal{A}^i\}_{1\le i \le d}$ be a family of $(m\times m)$-matrix valued fields such that $\mathcal{A}^i \in L^\infty(\Omega;\Complex^{m\times m}) \cap C^{0,\frac12}(\Omega; \Complex^{m\times m})$ for all $1\le i\le d$ and $\nabla \cdot \mathcal{A} \coloneqq \sum_{i=1}^d \partial_i \mathcal{A}^i \in L^\infty(\Omega;\Complex^{m\times m})$, where $\partial_i$ denotes the partial derivative with respect to the $i$th variable.
We additionally assume that the fields $\mathcal{A}^i$ are Hermitian, i.e., $\mathcal{A}^i = \left( \mathcal{A}^i\right)^\herm$ for all $1 \le i \le d$, and that there exists a real number $\Kmin > 0$ such that, almost everywhere in $\Omega$,
\begin{equation}\label{eq:lower.bound.K+KH-X}
  \mathcal{K} + \mathcal{K}^\herm - \nabla \cdot \mathcal{A} \ge 2 \Kmin \Id{m},
\end{equation}
where $\mathcal{I}_m \in \Real^{m \times m}$ denotes the identity matrix.
We define the following differential operators on $C^1(\overline{\Omega};\Complex^m)$:
\begin{equation}\label{eq:A1.A.tA}
  A_1 \coloneqq \sum_{i = 1}^d \mathcal{A}^i \partial_i,\qquad
  A \coloneqq \mathcal{K} + A_1,\qquad
  \tilde{A} \coloneqq
  \mathcal{K}^\herm - \nabla \cdot \mathcal{A} - A_1
  = \mathcal{K} + \mathcal{K}^\herm - \nabla \cdot \mathcal{A} - A,
\end{equation}
as well as the Hermitian field $\mathcal{N} \in L^\infty(\partial \Omega; \Complex^{m\times m})$ such that
\begin{equation}\label{eq:NF}
  \mathcal{N} \coloneqq \sum_{i = 1}^d n_i \mathcal{A}^i_{|\partial \Omega},
\end{equation}
where $n$ denotes the outward unit normal field on $\partial \Omega$.
In order to enforce boundary conditions, we will need another non-negative field $\mathcal{M} \in L^\infty(\partial \Omega; \Complex^{m\times m})$ such that $\Ker(\mathcal{M} - \mathcal{N}) + \Ker(\mathcal{M} + \mathcal{N}) = \Complex^m$.
Given $f \in L^2(\Omega;\Complex^m)$, we consider the problem that consists in finding $u : \Omega \to \Complex^m$ such that
\begin{equation}\label{eq:strong}
  \begin{alignedat}{2}
    A u &= f &\qquad& \text{in $\Omega$},
    \\
    (\mathcal{M} - \mathcal{N}) u &= 0 &\qquad& \text{on $\partial \Omega$}.
  \end{alignedat}
\end{equation}

The weak formulation of problem~\eqref{eq:strong} involves some technicalities and is discussed in detail in~\cite[Section~56.3]{Ern.Guermond:21*2}.
Denote by $V \coloneqq \left\{ v \in L^2(\Omega;\Complex^m) \st A_1 v \in L^2(\Omega;\Complex^m)\right\}$ the $L^2$-graph space of $A_1$.
Both $A$ and $\tilde{A}$ can be extended to operators in $\mathcal{L}(V;L^2(\Omega;\Complex^{m\times m}))$.
To circumvent the problem of traces for functions in $V$, we introduce the boundary operator $N \in \mathcal{L}(V;V')$ (with $V'$ dual space of $V$) such that $\langle N w, v\rangle_{V',V} \coloneqq (A w, v)_\Omega - (w, \tilde{A} v)_\Omega$ for all $(w,v) \in V^2$, where $(a,b)_\Omega \coloneqq \int_\Omega (a, b)_{\Complex^m}$ with $(\alpha,\beta)_{\Complex^m} \coloneqq \beta^\herm \alpha$ for all $\alpha,\,\beta \in \Complex^m$ (interpreted as column vectors).
It can be proved that $N$ is a boundary operator.
The role of $\mathcal{M}$ is played by the monotone operator $M \in \mathcal{L}(V;V')$ such that $\Ker(N - M) + \Ker(N + M) = V$.
Setting $V_0 \coloneqq \Ker(M - N)$, the weak formulation consists in finding $u \in V_0$ such that
\begin{equation}\label{eq:weak}
  (A u, v)_\Omega = (f, v)_\Omega \qquad \forall v \in L^2(\Omega;\Complex^m).
\end{equation}

Throughout the rest of the paper, we will bridge the strong and weak formulation by assuming that, for all $(w,v) \in H^s(\Omega;\Complex^m)$, $s > \frac12$, with $(a,b)_{\partial \Omega} \coloneqq \int_{\partial \Omega} (a, b)_{\Complex^m} $ usual inner product of $L^2(\partial \Omega; \Complex^m)$,
\[
\langle M w, v\rangle_{V',V} = (\mathcal{M} w, v)_{\partial \Omega},\qquad
\langle N w, v\rangle_{V',V} = (\mathcal{N} w, v)_{\partial \Omega}.
\]

\subsection{Examples}

We briefly discuss two examples that will be used in the numerical experiments of Section~\ref{sec:numerical.tests}.

\subsubsection{Scalar diffusion-advection-reaction}\label{subsubsec:scalar-dar}

Let $\partial\Omega=\Gamma_\dirichlet\cup\Gamma_\neumann$ with $\Gamma_\dirichlet\cap\Gamma_\neumann=\emptyset$ and $\Gamma_\dirichlet$ non-empty.
Consider a uniformly elliptic matrix-valued field $\kappa \in L^\infty(\Omega;\Real^{d\times d})$ and let $\mu \in L^\infty(\Omega;\Real)$.
Let $\beta \in L^\infty(\Omega;\Real^d) \cap C^{0,\frac12}(\Omega;\Real^d)$ be such that $\nabla\cdot\beta\in L^\infty(\Omega;\Real^d)$ and
assume that there exists $\Kmin>0$ such that $\mu -\frac12 \nabla\cdot\beta \ge \Kmin$ almost everywhere in $\Omega$.
Given $f_p \in L^2(\Omega;\Real^d)$, we consider the problem of finding the flux $\sigma:\Omega \to \Real^d$ and the scalar potential $p : \Omega \to \Real$ such that
\begin{equation}\label{eq:scalar_DAR}
  \begin{aligned}
    \kappa^{-1} \sigma + \nabla p &= 0 &\qquad& \text{in $\Omega$},
    \\
    \nabla \cdot \sigma + \beta\cdot\nabla p + \mu p &= f_p &\qquad& \text{in $\Omega$},
    \\
    p &= 0 &\qquad& \text{on $\Gamma_\dirichlet$},
    \\
    (\sigma + \beta p) \cdot n &= 0 &\qquad& \text{on $\Gamma_\neumann$}.
  \end{aligned}
\end{equation}
It is known that this problem can be recast into the form~\eqref{eq:strong} with $m = d + 1$ by setting $f \coloneqq
\begin{bmatrix}
  \mathcal{O}_d\\
  f_p
\end{bmatrix}$ and letting
\[
\mathcal{K} \coloneqq \begin{bmatrix}
  \kappa^{-1} & \mathcal{O}_{d} \\
  \mathcal{O}_{d}^\trans & \mu
\end{bmatrix},\qquad
\mathcal{A}^i \coloneqq \begin{bmatrix}
  \mathcal{O}_{d,d} & e_i \\
  e_i^\trans & \beta_i
\end{bmatrix},
\quad
\mathcal{N} \coloneqq \begin{bmatrix}
  \mathcal{O}_{d,d} & n \\
  n^\trans & \beta\cdot n
\end{bmatrix},
\quad
\]
where $e_i$ is the $i$th element of the canonical basis of $\Real^d$,
while $\mathcal{O}_d \in \Real^d$ and $\mathcal{O}_{d,d} \in \Real^{d \times d}$ respectively denote the zero (column) vector and matrix.
The boundary condition can be recast into the form $u\in\Ker(\mathcal{M}-\mathcal{N})$ setting
\[
\mathcal{M} \coloneqq \chi_{\Gamma_\dirichlet} \mathcal{M}_\dirichlet + \chi_{\Gamma_\neumann} \mathcal{M}_\neumann,
\qquad
\mathcal{M}_\dirichlet \coloneqq
\begin{bmatrix}
  \mathcal{O}_{d,d} & -n\\
  n^\trans & 0
\end{bmatrix},
\qquad
\mathcal{M}_\neumann \coloneqq
\begin{bmatrix}
  \mathcal{O}_{d,d} & n\\
  -n^\trans & 0
\end{bmatrix},
\]
where, for any subset $Z$ of $\Real^d$, $\chi_Z$ denotes its indicator function.

\subsubsection{Vector diffusion-advection-reaction}\label{subsubsec:vector-dar}

Assume $d = 3$ and let $\partial\Omega=\Gamma_\dirichlet\cup\Gamma_\neumann$ be partitioned as before.
Consider a positive diffusion coefficient $\varepsilon\in W^{1,\infty}(\Omega)$ and a scalar reaction coefficient $\gamma\in L^\infty(\Omega)$.
Let $\beta$ as before and define the inflow boundary $\Gamma^{-} \coloneqq  \left\{x\in\partial\Omega: \beta\cdot n(x) <0 \right\}$ as well as the outflow boundary $\Gamma^{+} \coloneqq \partial\Omega\setminus\Gamma^{-}$.
For a given source $f_p \in L^2(\Omega;\Real^3)$, we consider the problem of finding the vector potential $p:\Omega\to\Real^3$ such that:
\begin{equation}\label{eq:vector_DAR}
  \begin{alignedat}{2}
    \nabla\times(\varepsilon\nabla\times p) + L_\beta p + \gamma p
    &= f_p
    &\qquad& \text{in $\Omega$},
    \\
    n\times p + \chi_{\Gamma^-} (p\cdot n) n
    &= 0
    &\qquad& \text{on $\Gamma_\dirichlet$},
    \\
    n \times(\varepsilon \nabla \times p) + \chi_{\Gamma^-}(\beta\cdot n) p
    &= 0
    &\qquad& \text{on $\Gamma_\neumann$},
  \end{alignedat}
\end{equation}
where the Lie derivative operator $L_\beta$ is such that, for any function $q : \Omega \to \Real^3$ smooth enough,
\[
L_\beta q \coloneqq - \beta \times (\nabla \times q) + \nabla(\beta\cdot q).
\]
Introduce the flux $b \coloneqq \varepsilon\nabla \times p$ and the global variable $z \coloneqq (b,p)$.
Consider the identities
\[
\nabla \times s = \sum_{j=1}^{3} \mathcal{R}^j \partial_j s,
\qquad
\alpha\times s = \left(
\sum_{j=1}^{3} \mathcal{R}^j \alpha_j
\right) s \eqqcolon \mathcal{V}_\alpha s,
\]
where the family $(\mathcal{R}^j)_{1\le j\le 3}$ of anti-symmetric matrices $\mathcal{R}^j \in \Real^{3 \times 3}$ is such that $\mathcal{R}^{j}_{ik} = \epsilon_{ijk}$ for all $1 \le i,j,k\le 3$, $\epsilon$ being the Levi--Civita symbol.
Exploiting the vector identity
\[
\nabla(\beta\cdot p) = \beta\times(\nabla\times p) + p \times(\nabla\times \beta) + (p \cdot \nabla) \beta + (\beta \cdot \nabla) p,
\]
the problem can be rewritten in the form~\eqref{eq:strong} setting
$f \coloneqq
\begin{bmatrix}
  \mathcal{O}_3 \\ f_p
\end{bmatrix}$ and
\begin{equation}\label{eq:vector_DAR.as.fried}
  \mathcal{K} \coloneqq
  \begin{bmatrix}
    \varepsilon^{-1} \Id{3} & \mathcal{O}_{3,3} \\
    \mathcal{O}_{3,3} & (\nabla \beta - \mathcal{V}_{\nabla\times\beta} +\gamma\Id{3})
  \end{bmatrix},
  \quad
  \mathcal{A}^i \coloneqq
  \begin{bmatrix}
    \mathcal{O}_{3,3} & -\mathcal{R}^i\\
    -(\mathcal{R}^i)^\trans & \beta_i\Id{3}
  \end{bmatrix},
  \quad
  \mathcal{N} \coloneqq
  \begin{bmatrix}
    \mathcal{O}_{3,3} & -\mathcal{V}_n \\
    \mathcal{V}_n & (\beta\cdot n) \Id{3}
  \end{bmatrix}.
\end{equation}
The boundary condition can be recast in the form $z\in\Ker(\mathcal{N}-\mathcal{M})$ setting
\[
\mathcal{M} \coloneqq \chi_{\Gamma_\dirichlet} \mathcal{M}_\dirichlet + \chi_{\Gamma_\neumann} \mathcal{M}_\neumann,
\qquad
\mathcal{M}_\dirichlet \coloneqq
\begin{bmatrix}
  \mathcal{O}_{3,3} & \mathcal{V}_n\\
  \mathcal{V}_n & |\beta\cdot n|\Id{3}
\end{bmatrix},
\qquad
\mathcal{M}_\neumann \coloneqq
\begin{bmatrix}
  \mathcal{O}_{3,3} & -\mathcal{V}_n\\
  -\mathcal{V}_n & |\beta\cdot n|\Id{3}
\end{bmatrix}.
\]
To guarantee the validity of~\eqref{eq:lower.bound.K+KH-X},
we suppose the existence of $\Kmin > 0$ such that, almost everywhere in $\Omega$,
\[
\left(
\gamma - \frac{\nabla\cdot\beta}{2}
\right)\Id{3} + \nabla_\symm \beta \ge \Kmin \Id{3},
\]
where $\nabla_{\rm s}$ stands for the symmetric part of the gradient.

\begin{remark}[Interpretation of problem~\eqref{eq:vector_DAR}]
  In the context of magnetohydrodyamics, problem~\eqref{eq:vector_DAR} can be used to describe the magnetic field $b$ in terms of a vector potential $p$ (see~\cite[Section~5]{Heumann.Hiptmair:11} as well as~\cite{Heumann.Hiptmair:13}).
  In Section~\ref{subsec:magn.field.diff.conv}, the equation is adapted with minor changes to a context where the potential variable $p$ has the meaning of magnetic field and the flux $b$ is proportional to a current density.
\end{remark}


\section{Discrete setting}\label{sec:discrete.setting}

\subsection{Mesh}

We consider a polytopal mesh $(\Th,\Fh)$ matching the geometrical requirements detailed in~\cite[Definition 1.4]{Di-Pietro.Droniou:20}, with $\Th$ set of elements and $\Fh$ set of faces.
For any $Y \in \Th \cup \Fh$, we denote by $h_Y$ its diameter, so that $h=\max_{T\in\Th}h_T>0$.
Boundary faces lying on $\partial\Omega$  are collected in the set $\Fhb$ and we set $\Fhi \coloneqq \Fh \setminus \Fhb$.
For every mesh element $T\in\Th$, we denote by $\FT$ the subset of $\Fh$ containing the faces that lie on the boundary $\partial T$ of $T$.
For all $F \in \Fhb$, $n_F \coloneqq n$ denotes the unit normal vector to $F$ pointing out of $\Omega$.
For any $F \in \Fhi$, on the other hand, we fix a unique orientation by selecting once and for all a unit normal vector $n_F$.
For all $T \in \Th$ and all $F \in \FT$, we denote by $\omega_{TF}$ the orientation of $F$ relative to $T$ such that $\omega_{TF} n_F$ points out of $T$.

\subsection{Polynomial spaces}

Given $Y \in \Th \cup \Fh$ and an integer $\ell \ge 0$, we denote by $\Poly{\ell}(Y;\mathbb{X})$, $\mathbb{X} \in \{ \Complex^m, \Complex^{m\times m} \}$, the space spanned by the restriction to $Y$ of polynomials in the space variables of total degree $\le \ell$ with $\mathbb{X}$-valued coefficients.
The $L^2$-orthogonal projector on $\Poly{\ell}(Y;\mathbb{X})$ is denoted by $\lproj{\ell}{Y}$ irrespectively of the value of $\mathbb{X}$, all ambiguity being removed by the context.

\subsection{Trace operators on faces and local integration by parts formula}

For all $F \in \Fh$, denoting by $(n_{F,i})$ the Cartesian components of $n_F$, let
\[
\mathcal{N}_F \coloneqq \sum_{i = 1}^d n_{F,i} \mathcal{A}^i_{|F}.
\]
By the assumed regularity on $\mathcal{A}$, this quantity is single-valued on $F$ and it satisfies $\mathcal{N}_F \in L^\infty(F;\Complex^{m\times m})$.

For all $Y \in \Th \cup \Fh$, denote by $(w,v)_Y \coloneqq \int_Y (w,v)_{\Complex^m}$ the inner product of $L^2(Y;\Complex^m)$.
Let $T \in \Th$.
The following integration by parts formula corresponds to the first display equation in the proof of~\cite[Lemma~60.1]{Ern.Guermond:21*2}:
For all $(w,v) \in H^s(T;\Complex^m)^2$, $s > \frac12$,
\begin{equation}\label{eq:ibp:local}
  (A w, v)_T
  = (w, \tilde{A} v)_T + \sum_{F \in \FT} \omega_{TF} (\mathcal{N}_F w, v)_F
  = (w, \tilde{A} v)_T + \sum_{F \in \FT} \omega_{TF} (w, \mathcal{N}_F v)_F,
\end{equation}
where, to obtain the second equality, we have written $(\mathcal{N}_F w, v)_F = \int_F (\mathcal{N}_F w, v)_{\Complex^m} = \int_F v^\herm \mathcal{N}_F w = \int_F v^\herm \mathcal{N}_F^\herm w = \int_F (w, \mathcal{N}_F v)_{\Complex^m} = (w, \mathcal{N}_F v)_F$ (the third step being a consequence of the fact that $\mathcal{N}_F$ is a Hermitian field).


\section{A hybrid scheme for Friedrichs systems}\label{sec:scheme}

\subsection{Spaces and interpolators}

Let an integer $k \ge 0$ be fixed.
We consider discretizations of problem~\eqref{eq:weak} based on the following space:
\[
\underline{U}_h^k \coloneqq \left\{
\underline{v}_h = ( (v_T)_{T \in \Th}, (v_F)_{F \in \Fh} )
\st
\text{%
  $v_Y \in \Poly{k}(Y;\Complex^m)$ for all $Y \in \Th \cup \Fh$
}
\right\}.
\]
Let $s > \frac12$.
The meaning of the components is provided by the interpolator $\Ih : H^s(\Omega;\Complex^m) \to \Uh$ such that, for all $v \in H^s(\Omega;\Complex^m)$,
\[
\Ih v \coloneqq
\big(
(\lproj{k}{T} v)_{T \in \Th},
(\lproj{k}{F} v)_{F \in \Fh}
\big).
\]
For all $T \in \Th$, the restrictions of $\Uh$, $\underline{v}_h \in \Uh$, and $\Ih$ to $T$, obtained collecting the components attached to $T$ and its faces, are denoted replacing the subscript $h$ with $T$.
Finally, for all $\underline{v}_h \in \Uh$, we let $v_h \in L^2(\Omega;\Complex^m)$ be the piecewise polynomial field such that
\begin{equation}\label{eq:vh}
  (v_h)_{|T} \coloneqq v_T \qquad \forall T \in \Th.
\end{equation}

\subsection{Global discrete integration by parts formula}

Straightforward algebraic manipulations show that, for any Hermitian matrix $H \in \Complex^{m\times m}$ and vectors $\alpha_1,\,\beta_1,\,\alpha_2,\,\beta_2 \in \Complex^m$, it holds
\begin{equation}\label{eq:magic:algebraic}
  (H \alpha_1, \beta_1)_{\Complex^m} - (H \alpha_2, \beta_2)_{\Complex^m}
  = \left(\frac{\alpha_1 + \alpha_2}{2}, H(\beta_1 - \beta_2)\right)_{\Complex^m}
  + \left(H(\alpha_1 - \alpha_2), \frac{\beta_1 + \beta_2}{2}\right)_{\Complex^m}.
\end{equation}

\begin{proposition}[Global discrete integration by parts formula]\label{prop:ibp:global}
  For all $(\underline{w}_h, \underline{v}_h) \in \Uh \times \Uh$, it holds
  \begin{equation}\label{eq:ibp:global}
    \begin{aligned}
      \sum_{T \in \Th} (A w_T, v_T)_T
      &= \sum_{T \in \Th} (w_T, \tilde{A} v_T)_T
      \\
      &\quad
      - \sum_{T \in \Th} \sum_{F \in \FT} \omega_{TF} \left(\frac{w_F + w_T}{2}, \mathcal{N}_F (v_F - v_T)\right)_F
      \\
      &\quad
      - \sum_{T \in \Th} \sum_{F \in \FT} \omega_{TF}  \left(\mathcal{N}_F(w_F - w_T), \frac{v_F + v_T}{2}\right)_F
      \\
      &\quad
      + \sum_{F \in \Fhb} (\mathcal{N}_F w_F, v_F)_F.
    \end{aligned}
  \end{equation}
\end{proposition}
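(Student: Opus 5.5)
The plan is to apply the local integration by parts formula~\eqref{eq:ibp:local} element by element, which is legitimate since $w_T, v_T$ are polynomials and hence in $H^s(T;\Complex^m)$ for every $s$. Summing over $T\in\Th$ gives
\[
\sum_{T\in\Th}(A w_T, v_T)_T = \sum_{T\in\Th}(w_T,\tilde{A} v_T)_T + \sum_{T\in\Th}\sum_{F\in\FT}\omega_{TF}(\mathcal{N}_F w_T, v_T)_F .
\]
It then suffices to show that the boundary term equals the sum of the last three lines of~\eqref{eq:ibp:global}.

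For each pair $(T,F)$, I will use the algebraic identity~\eqref{eq:magic:algebraic} with $H=\mathcal{N}_F$ (Hermitian pointwise on $F$), $\alpha_1=w_F$, $\beta_1=v_F$, $\alpha_2=w_T$, $\beta_2=v_T$. Integrating over $F$ gives
\[
(\mathcal{N}_F w_T, v_T)_F = (\mathcal{N}_F w_F, v_F)_F - \left(\tfrac{w_F+w_T}{2},\mathcal{N}_F(v_F-v_T)\right)_F - \left(\mathcal{N}_F(w_F-w_T),\tfrac{v_F+v_T}{2}\right)_F .
\]
Multiplying by $\omega_{TF}$ and summing, the two last terms produce exactly the second and third lines of~\eqref{eq:ibp:global}. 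What remains is $\sum_{T}\sum_{F\in\FT}\omega_{TF}(\mathcal{N}_F w_F, v_F)_F$.

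The one step needing care is this remaining term. I will exchange the sums to write it as $\sum_{F\in\Fh}(\mathcal{N}_F w_F, v_F)_F\sum_{T\in\mathcal{T}_F}\omega_{TF}$, where $\mathcal{T}_F$ denotes the elements sharing $F$. This uses crucially that $w_F$, $v_F$ and $\mathcal{N}_F$ are single-valued on $F$; the latter holds by the regularity of $\mathcal{A}$ and the fixed choice of $n_F$. For an interior face, the two neighbouring elements have opposite orientations, so the inner sum vanishes. For a boundary face, the unique element $T$ has $\omega_{TF}=1$, since $n_F$ points out of $\Omega$ and hence out of $T$. This leaves exactly $\sum_{F\in\Fhb}(\mathcal{N}_F w_F,v_F)_F$, which concludes the argument.
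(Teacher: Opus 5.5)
Your proof is correct and follows essentially the same route as the paper: sum the local formula~\eqref{eq:ibp:local} over $T \in \Th$, apply~\eqref{eq:magic:algebraic} on each face with $(\alpha_1,\beta_1,\alpha_2,\beta_2)=(w_F,v_F,w_T,v_T)$, and use $\omega_{T_1F}+\omega_{T_2F}=0$ on interior faces (and $\omega_{TF}=1$ on boundary faces) to reduce $\sum_{T\in\Th}\sum_{F\in\FT}\omega_{TF}(\mathcal{N}_F w_F,v_F)_F$ to the boundary sum. The only difference is the order of the steps: the paper adds this vanishing face identity first and then applies the algebraic identity, whereas you apply the identity first and evaluate the leftover face sum at the end.
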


\begin{proof}
  By the single-valuedness of $(\mathcal{N}_F w_F, v_F)_F$ at interfaces together with the fact that $\omega_{T_1F} + \omega_{T_2F} = 0$ whenever $F \in \Fhi$ is shared by the mesh elements $T_1$ and $T_2$, we infer
  \[
  0 = \sum_{F \in \Fhb} (\mathcal{N}_F w_F, v_F)_F
  - \sum_{T \in \Th} \sum_{F \in \FT} \omega_{TF} (\mathcal{N}_F w_F, v_F)_F.
  \]
  Summing the local integration by parts formula~\eqref{eq:ibp:local} with $(w,v) = (w_T, v_T)$ over $T \in \Th$ and adding the above expression, we obtain
  \begin{equation}\label{eq:ibp:global:intermediate}
    \begin{aligned}
      \sum_{T \in \Th} (A w_T, v_T)_T
      &= \sum_{T \in \Th} (w_T, \tilde{A} v_T)_T
      \\
      &\quad
      - \sum_{T \in \Th} \sum_{F \in \FT} \omega_{TF} \left[
        (\mathcal{N}_F w_F, v_F)_F
        - (\mathcal{N}_F w_T, v_T)_F
        \right]
      \\
      &\quad
      + \sum_{F \in \Fhb} (\mathcal{N}_F w_F, v_F)_F.
    \end{aligned}
  \end{equation}
  For all $T \in \Th$ and all $F \in \FT$, using~\eqref{eq:magic:algebraic}, we infer, for all $a_1,\,b_1,\,a_2,\,b_2 \in L^2(F;\Complex^m)$,
  \begin{equation}\label{eq:magic.formula}
    (\mathcal{N}_F a_1, b_1)_F
    - (\mathcal{N}_F a_2, b_2)_F
    = \left(\frac{a_1 + a_2}{2}, \mathcal{N}_F (b_1 - b_2)\right)_F
    + \left(\mathcal{N}_F (a_1 - a_2), \frac{b_1 + b_2}{2}\right)_F.
  \end{equation}
  The conclusion follows applying the above relation with $(a_1,a_2,b_1,b_2) = (w_F, w_T, v_F, v_T)$ to reformulate the second term in the right-hand side of~\eqref{eq:ibp:global:intermediate}.
\end{proof}

\subsection{Reference quantities and notation for inequalities up to a constant}\label{sec:scheme:inequalities}

Recalling~\eqref{eq:lower.bound.K+KH-X}, we define, for all $T \in \Th$, the following quantities:
\begin{gather}\label{eq:Aref.tref}
  \Aref
  \coloneqq
  \max_{1\le i \le d} \norm{L^\infty(T; \Complex^{m\times m})}{\mathcal{A}^i},
  \\ \label{eq:tauT}
  \tref
  \coloneqq
  \max(\Aref h_T^{-1}, \Kmin)^{-1}
  = \min(\Aref^{-1} h_T, \Kmin^{-1}),
\end{gather}
and notice that, passing to the $L^\infty$-norm in~\eqref{eq:NF} and recalling that $n_F$ is a unit vector,
$\norm{L^\infty(F;\Complex^{m\times m})}{\mathcal{N}_F} \le d \Aref$ for all $F \in \FT$.
Moreover, recalling~\cite[Eq.~(60.27)]{Ern.Guermond:21*2}, there is $C_{\mathcal{A}} > 0$ such that, for all $T \in \Th$,
\begin{equation}\label{eq:bnd.pi.0.T.Ai-Ai}
  \norm{L^\infty(T;\Complex^{m\times m})}{\lproj{0}{T} \mathcal{A}^i - \mathcal{A}^i}
  \le C_{\mathcal{A}} \left(
  \Kmin \Aref h_T
  \right)^{\frac12}.
\end{equation}

From this point on, in order to alleviate the notation, we will write $a \lesssim b$ for the inequality $a\le Cb$ with $C$ possibly depending only on $d$, $\Omega$, the mesh regularity parameter of~\cite[Definition 1.9]{Di-Pietro.Droniou:20}, $C_{\mathcal{A}}$ in~\eqref{eq:bnd.pi.0.T.Ai-Ai}, and polynomial degrees involved in the expressions.
We stress that this means, in particular, that $C$ is independent of the mesh size, and, for local inequalities, on the mesh element or face on which they hold.
Additionally, no other dependence on the problem data $\mathcal{A}$, $\mathcal{K}$, and $\mathcal{M}$ is allowed.

\subsection{Discrete sesquilinear form}

We define the discrete sesquilinear form $a_h : \Uh \times \Uh \to \Complex$ such that, for all $(\underline{w}_h, \underline{v}_h) \in \Uh \times \Uh$,
\begin{equation}\label{eq:ah}
  \begin{aligned}
    a_h(\underline{w}_h, \underline{v}_h)
    &\coloneqq
    \sum_{T \in \Th} (A w_T, v_T)_T
    + \Kmin \sum_{T \in \Th} h_T \sum_{F \in \FT} (w_F - w_T, v_F - v_T)_F
    \\
    &\quad
    + \sum_{T \in \Th} \sum_{F \in \FT} \omega_{TF} \left(\mathcal{N}_F (w_F - w_T), \frac{v_F + v_T}{2}\right)_F
    \\
    &\quad
    + \frac12 \sum_{F \in \Fhb} ((\mathcal{M}_F + \mathcal{S}_F^{\rm b} - \mathcal{N}_F) w_F, v_F)_F
    + j_h(\underline{w}_h, \underline{v}_h),
  \end{aligned}
\end{equation}
with stabilization sesquilinear form $j_h : \Uh \times \Uh \to \Complex$ such that
\begin{equation}\label{eq:sh}
  j_h(\underline{w}_h, \underline{v}_h)
  \coloneqq
  \sum_{T \in \Th} \sum_{F \in \FT} (\mathcal{S}_{TF}^{\rm i} (w_F - w_T), v_F - v_T)_F.
\end{equation}
Above, $\{ \mathcal{S}_F^{\rm b} \}_{F \in \Fhb}$ and $\{ \mathcal{S}_{TF}^{\rm i} \}_{T \in \Th,\, F \in \FT}$ are two families of positive Hermitian operators the goal of which is to control boundary and interface jumps.

\begin{assumption}[Stabilization]\label{ass:stabilization}
  For all $T \in \Th$ and for all $F \in \FT$, the Hermitian positive field $\mathcal{S}_{TF}^{\rm i} \in L^{\infty}(F;\Complex^{m\times m})$ satisfies
  \begin{alignat}{4}
    \label{eq:STFi:boundedness}
    (\mathcal{S}_{TF}^{\rm i} v, v)_F
    &\lesssim \Aref \norm{L^2(F;\Complex^m)}{v}^2
    &\qquad& \forall v \in \Poly{k}(F;\Complex^m),
    \\ \label{eq:STFi:control.NF}
    |(w,\mathcal{N}_F v)_F|
    &\lesssim \Aref^{\frac12} \norm{L^2(F;\Complex^m)}{w}~(\mathcal{S}_{TF}^{\rm i} v, v)_F^{\frac12}
    &\qquad& \forall (w,v) \in L^2(F;\Complex^m)^2,
  \end{alignat}
  and, for all $F \in \Fhb$, denoting by $T_F$ the unique element of $\Th$ such that $F \in \mathcal{F}_{T_F}$, the Hermitian positive field $\mathcal{S}_F^{\rm b} \in L^{\infty}(F;\Complex^{m\times m})$ is such that
  \begin{alignat}{4}\label{eq:SFb:stabilization}
    \Ker (\mathcal{M}_F - \mathcal{N}_F)
    &\subset \Ker (\mathcal{M}_F +  \mathcal{S}_F^{\rm b} - \mathcal{N}_F),
    \\ \label{eq:SFb:control.NF}
    |((\mathcal{M}_F + \mathcal{S}_F^{\rm b} + \mathcal{N}_F) w, v)_F|
    &\lesssim \Aref[T_F]^{\frac12}~\norm{L^2(F;\Complex^m)}{w}~((\mathcal{M}_F + \mathcal{S}_F^{\rm b}) v, v)_F^{\frac12}
    &\qquad& \forall (w,v) \in L^2(F;\Complex^m)^2.
  \end{alignat}
\end{assumption}


\begin{remark}[Stabilization for scalar and vector diffusion-advection-reaction]\label{rmk:penalty-fields}
  Following~\cite[Example~60.14]{Ern.Guermond:21*2}, penalty fields fulfilling Assumption~\ref{ass:stabilization} for the scalar diffusion-advection-reaction equation of Section~\ref{subsubsec:scalar-dar} can be designed setting, with $\Aref[T] \coloneqq \max(1, \|\beta\|_{L^\infty(T, \Real^d)})$,
  \[
  \mathcal{S}_{TF}^{\rm i} =
  \begin{bmatrix}
    \Aref[T] n_F\otimes n_F & \mathcal{O}_{d}
    \\
    \mathcal{O}_{d}^\trans & |\beta\cdot n|
  \end{bmatrix}
  ,
  \qquad
  \mathcal{S}_{F}^{\rm b} = \Aref[T]
  \begin{bmatrix}
    \mathcal{O}_{d d} & \mathcal{O}_{d}
    \\
    \mathcal{O}_{d}^\trans & 1
  \end{bmatrix}.
  \]
  For the vector diffusion-advection-reaction equation of Section~\ref{subsubsec:vector-dar}, following~\cite[Example 60.15]{Ern.Guermond:21*2}, penalty fields can be taken, with $\Aref[T] \coloneqq \max(1, \norm{L^\infty(T, \Real^3)}{\beta})$, equal to
  \[
  \mathcal{S}_{TF}^{\rm i} = \Aref[T]
  \begin{bmatrix}
    \mathcal{V}_n^\trans \mathcal{V}_n & \mathcal{O}_{dd}
    \\
    \mathcal{O}_{dd}^\trans & \mathcal{V}_n^\trans\mathcal{V}_n
  \end{bmatrix}
  ,
  \qquad
  \mathcal{S}_{F}^{\rm b} = \Aref[T]
  \begin{bmatrix}
    \mathcal{O}_{d d} & \mathcal{O}_{dd}
    \\
    \mathcal{O}_{dd}^\trans & \mathcal{V}_n^\trans\mathcal{V}_n
  \end{bmatrix}.
  \]
\end{remark}

\subsection{Discrete problem}

The discrete problem reads: Find $\underline{u}_h \in \Uh$ such that
\begin{equation}\label{eq:discrete}
  a_h(\underline{u}_h, \underline{v}_h) = (f, v_h)_\Omega
  \qquad \forall \underline{v}_h \in \Uh,
\end{equation}
where $v_h$ is defined by~\eqref{eq:vh}.
Some remarks are in order.

\begin{remark}[Static condensation]
  The algebraic problem corresponding to~\eqref{eq:discrete} can be efficiently solved by first statically condensing the element unknowns element by element and then solving a global system in the face unknowns only.
  Such a strategy leads, in many cases, to algebraic problems that are significantly smaller than the ones for DG methods; see, e.g.,~\cite{Botti.Di-Pietro:22} on this subject, where a detailed comparison of Hybrid High-Order and DG methods for the Stokes problem in three space dimensions is provided.
\end{remark}

\begin{remark}[Flux formulation]
  For an element $T\in\Th$ and a face $F\in\FT$, define the numerical flux $\Phi_{TF} : \UT \to L^2(F; \Complex^m)$ such that, for all $\underline{v}_T \in \UT$,
  \[
  \Phi_{TF} (\underline{v}_T) \coloneqq
  \begin{cases}
    \lproj{k}{F} \left[\omega_{TF} \mathcal{N}_F\left( \frac{v_F +v_T}{2}\right) - (\Kmin \Id{m} + \mathcal{S}_{TF}^{\rm i})(v_F-v_T)\right]
    &\quad F\in \Fhi,
    \\
    \lproj{k}{F} \left[\mathcal{N}_F\left( \frac{v_F +v_T}{2}\right) - (\Kmin \Id{m} + \mathcal{S}_{TF}^{\rm i})(v_F-v_T) - \frac12 (\mathcal{M}_F + \mathcal{N}_F + \mathcal{S}^{\rm b}_{TF}) v_F \right]
    &\quad F\in \Fhb.
  \end{cases}
  \]
  For $F \in \Fhi$, denote by $T_1$ and $T_2$ the elements sharing $F$.
  Problem~\eqref{eq:discrete} admits the following equivalent flux formulation:
  Find $\underline{u}_h\in\Uh$ such that
  \begin{subequations}\label{eq:discrete:flux}
    \begin{alignat}{2}\label{eq:discrete:flux:local.balance}
      (u_T, \tilde{A} v_T)_T + \sum_{F\in\FT} (\Phi_{TF}(\underline{u}_T), v_T)_F &= (f, v_T)_T
      &\qquad& \forall T\in\Th, \, \forall v_T \in \Poly{k}(T),
      \\ \label{eq:discrete:flux:continuity}
      \Phi_{T_1F}(\underline{u}_{T_1}) + \Phi_{T_2F} (\underline{u}_{T_2}) &= 0
      &\qquad& \forall F\in \Fhi,
      \\ \label{eq:discrete:flux:boundary}
      \Phi_{T_F F}(\underline{u}_{T_F}) &= 0
      &\qquad& \forall F\in \Fhb.
    \end{alignat}
  \end{subequations}
\end{remark}

\begin{remark}[Comparison with the scheme of~\cite{Chen.Kang.ea:24}]\label{rem:Chen.Kang.ea:24:comparison}
  In~\cite{Chen.Kang.ea:24}, an $hp$-hybridizable DG scheme for Friedrichs systems with
  a similar choice of spaces as the one considered here is proposed.
  The authors present an upwind-based numerical flux (see~\cite[Eq. 3.5]{Chen.Kang.ea:24}) which, for $F\in \Fhi$, can be rephrased with the notation of this work as
  \[
  \Phi_{TF} (\underline{v}_T) \coloneqq  \lproj{k}{F} \left[ \omega_{TF} \mathcal{N}_F v_T - |\mathcal{N}_F|(v_F-v_T) \right],
  \]
  where $|\mathcal{N}_F|$ is well-defined since $\mathcal{N}_F$ is Hermitian.
  Our analysis can be adapted with very minor adjustments to cover this option of numerical flux,
  thereby completing the partial coercivity results of~\cite{Chen.Kang.ea:24} with inf-sup stability and convergence results.
  In particular, it is readily verified that the choice $\mathcal{S}_{TF}^{\rm i} = |\mathcal{N}_F|$ fulfils Assumption~\ref{ass:stabilization}.
\end{remark}


\section{Stability and error analysis}\label{sec:analysis}

In this section we carry out a stability and convergence analysis for the numerical scheme~\eqref{eq:discrete}.

\subsection{Stability}

\begin{proposition}[Equivalent reformulation of $a_h$]\label{eq:ah:reformulation.stability}
  For all $(\underline{w}_h, \underline{v}_h) \in \Uh \times \Uh$, it holds
  \begin{equation}\label{eq:ah:reformulation.stability}
    \begin{aligned}
      a_h(\underline{w}_h, \underline{v}_h)
      &=
      \frac12 \sum_{T \in \Th} \left[
        (A w_T, v_T)_T
        + (w_T, \tilde{A} v_T)_T
        \right]
      + \Kmin \sum_{T \in \Th} h_T \sum_{F \in \FT} (w_F - w_T, v_F - v_T)_F
      \\
      &\quad
      + \frac12 \sum_{T \in \Th} \sum_{F \in \FT} \omega_{TF} \left[
        (\mathcal{N}_F w_F, v_T)_F
        - (\mathcal{N}_F w_T, v_F)_F
        \right]
      \\
      &\quad
      + \frac12 \sum_{F \in \Fhb} ((\mathcal{M}_F + \mathcal{S}_F^{\rm b}) w_F, v_F)_F
      + j_h(\underline{w}_h, \underline{v}_h).
    \end{aligned}
  \end{equation}
\end{proposition}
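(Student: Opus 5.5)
Since the stabilization term $\Kmin\sum_T h_T\sum_{F}(w_F-w_T,v_F-v_T)_F$ and $j_h$ appear unchanged in both~\eqref{eq:ah} and~\eqref{eq:ah:reformulation.stability}, I only need to match the remaining terms. I will split the consistency term in half, $\sum_{T}(Aw_T,v_T)_T=\frac12\sum_T(Aw_T,v_T)_T+\frac12\sum_T(Aw_T,v_T)_T$, and rewrite the second half with the global discrete integration by parts formula of Proposition~\ref{prop:ibp:global}. This gives
\[
\tfrac12\sum_T\big[(Aw_T,v_T)_T+(w_T,\tilde A v_T)_T\big]
-\tfrac12\sum_T\sum_{F\in\FT}\omega_{TF}\Big[\big(\tfrac{w_F+w_T}{2},\mathcal N_F(v_F-v_T)\big)_F+\big(\mathcal N_F(w_F-w_T),\tfrac{v_F+v_T}{2}\big)_F\Big]
+\tfrac12\sum_{F\in\Fhb}(\mathcal N_Fw_F,v_F)_F .
\]
The first bracket is already the first term of~\eqref{eq:ah:reformulation.stability}.

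Next I collect the face terms. The $-\frac12(\mathcal N_F w_F,v_F)_F$ coming from the boundary term of $a_h$ cancels the $+\frac12\sum_{F\in\Fhb}(\mathcal N_F w_F,v_F)_F$ above. This leaves $\frac12\sum_{F\in\Fhb}((\mathcal M_F+\mathcal S_F^{\rm b})w_F,v_F)_F$, as required. What remains is to show that the following equals $\frac12\sum_T\sum_{F}\omega_{TF}[(\mathcal N_Fw_F,v_T)_F-(\mathcal N_Fw_T,v_F)_F]$:
\[
\sum_T\sum_{F}\omega_{TF}\Big[\tfrac12\big(\mathcal N_F(w_F-w_T),\tfrac{v_F+v_T}{2}\big)_F-\tfrac12\big(\tfrac{w_F+w_T}{2},\mathcal N_F(v_F-v_T)\big)_F\Big].
\]

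This is purely algebraic and can be done face by face, using the Hermitian property $(a,\mathcal N_F b)_F=(\mathcal N_F a,b)_F$. I expand both products, each giving four terms with a factor $\frac14$. The diagonal terms $(\mathcal N_Fw_F,v_F)_F$ and $(\mathcal N_Fw_T,v_T)_F$ appear with opposite signs in the two products, so they cancel. Each cross term appears twice with the same sign, leaving $\frac12[(\mathcal N_Fw_F,v_T)_F-(\mathcal N_Fw_T,v_F)_F]$ inside the bracket. With the outer factor from the bracket, this is exactly the claimed term.

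The only real obstacle is careful sign and coefficient bookkeeping in this last expansion. No analytic estimates are needed.
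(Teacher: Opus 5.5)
Your proposal is correct and follows the paper's proof. You split $\sum_{T}(Aw_T,v_T)_T$ in half, apply Proposition~\ref{prop:ibp:global} to one half, cancel the boundary $\mathcal{N}_F$ terms, and finish with a face-by-face algebraic identity. The only difference is cosmetic: the paper gets the last identity from \eqref{eq:magic.formula} with $(a_1,b_1,a_2,b_2)=(w_F-w_T,v_F+v_T,w_F+w_T,v_F-v_T)$, while you expand the products directly, which is the same computation (strictly, the diagonal terms have the same sign in the two products and cancel only because of the subtraction).
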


\begin{proof}
  Write the first term in the right-hand side of~\eqref{eq:ah} as $\sum_{T \in \Th} (A w_T, v_T)_T = \frac12 \sum_{T \in \Th} (A w_T, v_T)_T + \frac12 \sum_{T \in \Th} (A w_T, v_T)_T$ and apply~\eqref{eq:ibp:global} to the second addend to obtain
  \[
  \begin{aligned}
    a_h(\underline{w}_h, \underline{v}_h)
    &=
    \frac12 \sum_{T \in \Th} \left[
      (A w_T, v_T)_T
      + (w_T, \tilde{A} v_T)_T
      \right]
    + \Kmin \sum_{T \in \Th} h_T \sum_{F \in \FT} (w_F - w_T, v_F - v_T)_F
    \\
    &\quad
    + \frac14 \sum_{T \in \Th} \sum_{F \in \FT} \omega_{TF} \left[
      (\mathcal{N}_F(w_F - w_T), v_F + v_T)_F
      - (w_F + w_T, \mathcal{N}_F (v_F - v_T))_F
      \right]
    \\
    &\quad
    + \frac12 \sum_{F \in \Fhb} ((\mathcal{M}_F + \mathcal{S}_F^{\rm b}) w_F, v_F)_F
    + j_h(\underline{w}_h, \underline{v}_h).
  \end{aligned}
  \]
  Applying~\eqref{eq:magic.formula} with $(a_1,b_1,a_2,b_2) = \left(w_F - w_T, v_F + v_T, w_F + w_T, v_F - v_T\right)$ to the terms in the second line after using the fact that $\mathcal{N}_F$ is Hermitian, the conclusion follows.
\end{proof}

We define the seminorms and norms such that, for all $\underline{v}_h \in \Uh$,
\begin{equation}\label{eq:seminorm.S.b}
  \seminorm{{\rm b},h}{\underline{v}_h}^2
  \coloneqq
  \sum_{F \in \Fhb} ((\mathcal{M}_F + \mathcal{S}_F^{\rm b}) v_F, v_F)_F,
  \qquad
  \seminorm{j,h}{\underline{v}_h}
  \coloneqq j_h(\underline{v}_h, \underline{v}_h),
\end{equation}
\begin{equation}\label{eq:tnorm.h}
  \begin{aligned}
    \tnorm{\flat,h}{\underline{v}_h}^2
    &\coloneqq \Kmin \sum_{T \in \Th} \norm{0,T}{\underline{v}_T}^2
    + \frac12 \seminorm{{\rm b},h}{\underline{v}_h}^2
    + \seminorm{j,h}{\underline{v}_h}^2,
    \\
    \tnorm{h}{\underline{v}_h}^2
    &\coloneqq
    \tnorm{\flat,h}{\underline{v}_h}^2
    + \sum_{T \in \Th} \tref \norm{L^2(T;\Complex^m)}{A_1 v_T}^2,
  \end{aligned}
\end{equation}
where
\begin{equation}\label{eq:norm.0.T}
  \norm{0,T}{\underline{v}_T}^2
  \coloneqq
  \norm{L^2(T;\Complex^m)}{v_T}^2
  + \seminorm{0,\partial T}{\underline{v}_T}^2,
  \qquad
  \seminorm{0,\partial T}{\underline{v}_T}^2
  \coloneqq h_T \sum_{F \in \FT} \norm{L^2(F;\Complex^m)}{v_F - v_T}^2.
\end{equation}

\begin{corollary}[Partial coercivity of $a_h$]\label{cor:ah:partial.coercivity}
  Denoting by $\Re:\Complex \to \Real$ the real part operator, for all $\underline{v}_h \in \Uh$, it holds
  \begin{equation}\label{eq:ah:partial.coercivity}
    \tnorm{\flat,h}{\underline{v}_h}^2
    \le
    \frac12 ((\mathcal{K} + \mathcal{K}^\herm - \nabla \cdot \mathcal{A}) v_h, v_h)_\Omega
    + \Kmin \sum_{T \in \Th} \seminorm{0,\partial T}{\underline{v}_T}^2
    + \frac12 \seminorm{{\rm b},h}{\underline{v}_h}^2
    + \seminorm{j,h}{\underline{v}_h}^2
    = \Re(a_h(\underline{v}_h, \underline{v}_h)).
  \end{equation}
\end{corollary}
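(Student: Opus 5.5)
The plan is to prove the equality first, by taking $\underline{w}_h = \underline{v}_h$ in the reformulation~\eqref{eq:ah:reformulation.stability} of Proposition~\ref{eq:ah:reformulation.stability} and then taking real parts term by term. After that, the inequality will follow from the lower bound~\eqref{eq:lower.bound.K+KH-X}.

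\textbf{Volume term.} For the first term of~\eqref{eq:ah:reformulation.stability} we use the definition~\eqref{eq:A1.A.tA} of $\tilde{A}$, which gives $A + \tilde{A} = \mathcal{K} + \mathcal{K}^\herm - \nabla\cdot\mathcal{A}$. Hence
\[
\tfrac12\big[(A v_T, v_T)_T + (v_T, \tilde{A} v_T)_T\big] = \tfrac12 \big(A v_T, v_T\big)_T + \tfrac12 \overline{(\tilde{A} v_T, v_T)_T}.
\]
Its real part is $\tfrac12\Re\big((A+\tilde{A})v_T, v_T\big)_T$. The matrix $\mathcal{K}+\mathcal{K}^\herm-\nabla\cdot\mathcal{A}$ is Hermitian, since each $\mathcal{A}^i$ is Hermitian and therefore so are their derivatives. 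This makes the quantity real, and summing over $T$ gives $\tfrac12((\mathcal{K}+\mathcal{K}^\herm-\nabla\cdot\mathcal{A})v_h,v_h)_\Omega$.

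\textbf{Jump and boundary terms.} The term $\Kmin\sum_T h_T\sum_F\norm{L^2(F;\Complex^m)}{v_F-v_T}^2$ equals $\Kmin\sum_T\seminorm{0,\partial T}{\underline{v}_T}^2$ by~\eqref{eq:norm.0.T}. The boundary term is $\tfrac12\seminorm{{\rm b},h}{\underline{v}_h}^2$ by~\eqref{eq:seminorm.S.b}. The stabilization $j_h(\underline{v}_h,\underline{v}_h)$ is real and nonnegative because $\mathcal{S}^{\rm i}_{TF}$ is Hermitian positive, and it equals $\seminorm{j,h}{\underline{v}_h}^2$ under the intended reading of the notation.

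\textbf{Skew face term.} This is the only delicate step: we must show that $\tfrac12\sum_T\sum_F\omega_{TF}[(\mathcal{N}_F v_F, v_T)_F - (\mathcal{N}_F v_T, v_F)_F]$ is purely imaginary. Since $\mathcal{N}_F$ is Hermitian, the identity used in~\eqref{eq:ibp:local} gives $(\mathcal{N}_F v_T, v_F)_F = (v_T,\mathcal{N}_F v_F)_F = \overline{(\mathcal{N}_F v_F, v_T)_F}$. Each bracket is therefore $z-\bar z = 2i\Im z$, so its real part vanishes. Collecting the four contributions yields the equality in~\eqref{eq:ah:partial.coercivity}.

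\textbf{Inequality.} By~\eqref{eq:lower.bound.K+KH-X}, the volume term is bounded below by $\Kmin\sum_T\norm{L^2(T;\Complex^m)}{v_T}^2$. Adding the jump contribution reconstructs $\Kmin\sum_T\norm{0,T}{\underline{v}_T}^2$, and the remaining boundary and stabilization terms appear identically on both sides. This gives $\tnorm{\flat,h}{\underline{v}_h}^2 \le \Re(a_h(\underline{v}_h,\underline{v}_h))$.
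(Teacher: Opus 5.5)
Your proposal is correct and follows the same route as the paper: specialize the equivalent reformulation of $a_h$ to $\underline{w}_h=\underline{v}_h$, take real parts, and conclude with the lower bound~\eqref{eq:lower.bound.K+KH-X}. You handle the volume term via $\Re\overline{z}=\Re z$ and $A+\tilde{A}=\mathcal{K}+\mathcal{K}^\herm-\nabla\cdot\mathcal{A}$, which is slightly cleaner than the paper's formal manipulation with $A^\herm$, and you show explicitly that the skew face term has zero real part, which the paper leaves implicit.
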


\begin{proof}
  The inequality in~\eqref{eq:ah:partial.coercivity} is a straightforward consequence of the lower bound~\eqref{eq:lower.bound.K+KH-X} for $\mathcal{K} + \mathcal{K}^\herm - \nabla \cdot \mathcal{A}$ together with the definition~\eqref{eq:tnorm.h} of the $\tnorm{\flat,h}{\cdot}$-norm.
  To prove the equality, we start by writing~\eqref{eq:ah:reformulation.stability} for $\underline{w}_h = \underline{v}_h$ and notice that, for all $T \in \Th$,
  \[
  \begin{aligned}
    (A v_T, v_T)_T
    + (v_T, \tilde{A} v_T)_T
    &= \int_T v_T^\herm A v_T
    + \int_T (\tilde{A} v_T)^\herm v_T
    \\
    \overset{\eqref{eq:A1.A.tA}}&=
    \int_T v_T^\herm A v_T
    + \int_T v_T^\herm (\mathcal{K} + \mathcal{K}^\herm - \nabla \cdot \mathcal{A} - A)^\herm v_T
    \\
    &=
    ((A - A^\herm) v_T, v_T)_T
    + ((\mathcal{K} + \mathcal{K}^\herm - \nabla \cdot \mathcal{A}) v_T, v_T)_T,
  \end{aligned}
  \]
  where we have used the fact that $\mathcal{K} + \mathcal{K}^\herm - \nabla \cdot \mathcal{A}$ is a Hermitian field to conclude.
  The desired result then follows noticing that $\Re(((A - A^\herm) v_T, v_T)_T) = 0$ since $(A - A^\herm)$ is skew-Hermitian by definition.
\end{proof}

\begin{lemma}[Inf-sup condition]\label{lem:inf-sup}
  Let Assumptions~\ref{ass:stabilization} hold.
  Then, for all $\underline{v}_h \in \Uh$, it holds
  \begin{equation}\label{eq:inf-sup}
    \tnorm{h}{\underline{v}_h}
    \lesssim \rho^2 \sup_{\underline{w}_h \in \Uh \setminus \{ \underline{0} \}}\frac{
      |a_h(\underline{v}_h, \underline{w}_h)|
    }{\tnorm{h}{\underline{w}_h}},
  \end{equation}
  where
  \begin{equation}\label{eq:rho}
    \text{%
      $\rho \coloneqq \frac{\Kmax}{\Kmin}$
      with $\Kmax \coloneqq \norm{L^\infty(\Omega;\Complex^{m\times m})}{\mathcal{K}} + \norm{L^\infty(\Omega;\Complex^{m\times m})}{\nabla \cdot \mathcal{A}}$.
    }
  \end{equation}
  As a consequence, problem~\eqref{eq:discrete} is well-posed.
\end{lemma}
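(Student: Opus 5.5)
The plan follows the continuous-level argument of Ern--Guermond for Friedrichs systems. Partial coercivity (Corollary~\ref{cor:ah:partial.coercivity}) controls $\tnorm{\flat,h}{\cdot}$; the only missing piece of $\tnorm{h}{\cdot}$ is the weighted advective derivative $\sum_T \tref\norm{L^2(T;\Complex^m)}{A_1 v_T}^2$. To recover it, we test with a discrete directional derivative. Fix $\underline{v}_h$ and let $\underline{z}_h\in\Uh$ have element components $z_T\coloneqq \tref\,\sum_{i}\lproj{0}{T}\mathcal{A}^i\,\partial_i v_T$, which is a polynomial of degree $\le k-1\le k$, and zero face components $z_F\coloneqq 0$. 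One could also take $z_F=\lproj{k}{F}$ of a neighbouring trace, but the zero choice simplifies the face terms.

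The test function will be $\underline{w}_h=\underline{v}_h+\eta\,\underline{z}_h$, with $\eta=c\rho^{-1}$ for a small constant $c$. Since $\Re a_h(\underline{v}_h,\underline{v}_h)\ge\tnorm{\flat,h}{\underline{v}_h}^2$, it suffices to bound $\Re a_h(\underline{v}_h,\underline{z}_h)$ from below. We use the original form~\eqref{eq:ah} and split the terms as follows.
\begin{itemize}
\item[(i)] $(A_1 v_T,z_T)_T = \tref\norm{L^2(T;\Complex^m)}{A_1 v_T}^2 - \tref(A_1v_T,\sum_i(\mathcal{A}^i-\lproj{0}{T}\mathcal{A}^i)\partial_iv_T)_T$. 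We bound the remainder using~\eqref{eq:bnd.pi.0.T.Ai-Ai} and an inverse inequality $\norm{}{\partial_i v_T}\lesssim h_T^{-1}\norm{}{v_T}$. Since $\tref(\Kmin\Aref h_T)^{1/2}h_T^{-1}\le \Kmin^{1/2}\tref^{1/2}$ (using $\tref\le h_T/\Aref$ and $\tref\le\Kmin^{-1}$), the remainder is $\lesssim \tref^{1/2}\norm{}{A_1v_T}\,\Kmin^{1/2}\norm{}{v_T}$.
\item[(ii)] $(\mathcal{K}v_T,z_T)_T$ is bounded by $\Kmax\norm{}{v_T}\,\tref\norm{}{A_1v_T}\le \rho\,\Kmin^{1/2}\norm{}{v_T}\,\tref^{1/2}\norm{}{A_1 v_T}$, again using $\tref\le\Kmin^{-1}$.
\item[(iii)] The face terms $\Kmin h_T(v_F-v_T,-z_T)_F$, $\omega_{TF}(\mathcal{N}_F(v_F-v_T),z_T/2)_F$ and $(\mathcal{S}^{\rm i}_{TF}(v_F-v_T),-z_T)_F$ are treated with~\eqref{eq:STFi:control.NF} and Cauchy--Schwarz to be controlled by $\seminorm{j,h}{\underline{v}_h}$, together with a discrete trace inequality $\norm{L^2(F)}{z_T}\lesssim h_T^{-1/2}\norm{L^2(T)}{z_T}$. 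The weight $\Aref^{1/2}h_T^{-1/2}\tref^{1/2}\le1$ then gives bounds $\lesssim \seminorm{j,h}{\underline{v}_h}\,\tref^{1/2}\norm{}{A_1 v_T}$. The $\Kmin h_T$ term is handled the same way with $\seminorm{0,\partial T}{\cdot}$.
\item[(iv)] There are no boundary contributions, since $z_F=0$.
\end{itemize}

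We also need $\norm{}{z_T}\le \tref\norm{}{A_1v_T}+\text{(remainder as in (i))}$, which gives $\tnorm{h}{\underline{z}_h}\lesssim\tnorm{h}{\underline{v}_h}$. The face jumps $h_T^{1/2}\norm{}{z_T}_F$ and the $j$- and $A_1$-parts of $\underline{z}_h$ are all bounded via trace and inverse inequalities by $\tref^{1/2}\norm{}{A_1 v_T}+\Kmin^{1/2}\norm{}{v_T}$. For the $A_1 z_T$ part, inverse estimates give $\tref^{1/2}\norm{}{A_1z_T}\lesssim \tref^{1/2}\Aref h_T^{-1}\norm{}{z_T}\lesssim \tref^{-1/2}\norm{}{z_T}$.

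Collecting the bounds and applying Young's inequality yields
\[
\Re a_h(\underline{v}_h,\underline{z}_h)\ge \tfrac12\sum_T\tref\norm{}{A_1v_T}^2 - C\rho^2\tnorm{\flat,h}{\underline{v}_h}^2 .
\]
Choosing $\eta\simeq\rho^{-2}$ gives $\Re a_h(\underline{v}_h,\underline{w}_h)\gtrsim\rho^{-2}\tnorm{h}{\underline{v}_h}^2$ and $\tnorm{h}{\underline{w}_h}\lesssim\tnorm{h}{\underline{v}_h}$, from which~\eqref{eq:inf-sup} follows. A small discrepancy remains: the statement concerns $a_h(\underline{v}_h,\underline{w}_h)$ with $\underline{v}_h$ as first argument, which is exactly our setting. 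Well-posedness then follows because a square finite-dimensional system with trivial kernel is invertible.

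The main obstacle is step (ii) combined with the bookkeeping of the powers of $\rho$. The $\mathcal{K}$- and $\nabla\cdot\mathcal{A}$-terms are only controlled by $\rho$ times the coercive norm, and this is what forces the scaling $\eta\sim\rho^{-2}$. A second delicate point is the consistent use of $\tref$ as a weight in every inverse and trace inequality, so that no mesh-dependent constants appear.
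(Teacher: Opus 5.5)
Your proposal is correct and follows the paper's proof. It uses the same test function ($z_T=\tref\sum_i\lproj{0}{T}\mathcal{A}^i\partial_i v_T$, $z_F=0$), the same term-by-term bounds, and the same origin of $\rho^2$ (Young's inequality on the $\mathcal{K}$-term); the only difference is that you assemble everything through the single test function $\underline{v}_h+\eta\,\underline{z}_h$, whereas the paper bounds $\tnorm{\flat,h}{\underline{v}_h}^2$ and $\sum_T\tref\norm{L^2(T;\Complex^m)}{A_1v_T}^2$ separately against the supremum, which is equivalent bookkeeping. Note that your initial choice $\eta=c\rho^{-1}$ is inconsistent with your own lower bound $\Re a_h(\underline{v}_h,\underline{z}_h)\ge\frac12\sum_T\tref\norm{L^2(T;\Complex^m)}{A_1v_T}^2-C\rho^2\tnorm{\flat,h}{\underline{v}_h}^2$ (the coefficient of $\tnorm{\flat,h}{\underline{v}_h}^2$ turns negative for large $\rho$), so only your later choice $\eta\simeq\rho^{-2}$ works.
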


\begin{proof}
  The well-posedness of problem~\eqref{eq:discrete} is a classical consequence of the inf-sup condition~\eqref{eq:inf-sup}, so we only detail the proof of the latter.
  To this purpose, let $\underline{v}_h \in \Uh$.
  \medskip \\
  \underline{(i) \emph{Test function for the control of the directional derivative}.}
  Define $\underline{z}_h \in \Uh$ such that
  \begin{equation}\label{eq:zh}
    \begin{alignedat}{2}
      \text{
        $z_T = \tref A_{1,T}^0 v_T$ for all $T \in \Th$
        and $z_F = 0$ for all $F \in \Fh$,
      }
    \end{alignedat}
  \end{equation}
  where $A_{1,T}^0 \coloneqq \sum_{i = 1}^d \lproj{0}{T} \mathcal{A}^i \partial_i$.
  We first prove that
  \begin{equation}\label{eq:zh:L.norm.estimate}
    \sum_{T \in \Th} \tref^{-1} \norm{L^2(T;\Complex^m)}{z_T}^2
    \lesssim
    \tnorm{h}{\underline{v}_h}^2.
  \end{equation}
  To this end, given $T \in \Th$, we use a triangle inequality to write
  \[
  \tref^{-\frac12} \norm{L^2(T;\Complex^m)}{z_T}
  \overset{\eqref{eq:zh}}\le
  \tref^{\frac12} \norm{L^2(T;\Complex^m)}{(A_{1,T}^0 - A_1) v_T}
  + \tref^{\frac12} \norm{L^2(T;\Complex^m)}{A_1 v_T}.
  \]
  For the first term in the right-hand side, using first $(\infty,2)$-H\"older inequalities and then discrete inverse inequalities to write $\norm{L^2(T;\Complex^m)}{\partial_i v_T} \lesssim h_T^{-1} \norm{L^2(T;\Complex^m)}{v_T}$ for all $1\le i \le d$, we get
  \begin{equation}\label{eq:zh:tnorm.estimate:T1}
    \begin{aligned}
      \tref^{\frac12} \norm{L^2(T;\Complex^m)}{(A_{1,T}^0 - A_1) v_T}
      &\lesssim \tref^{\frac12}
      \, \norm{L^\infty(T;\Complex^{m\times m})}{\lproj{0}{T} \mathcal{A}^i - \mathcal{A}^i}
      \, h_T^{-1} \norm{L^2(T;\Complex^m)}{v_T}
      \\
      \overset{\eqref{eq:bnd.pi.0.T.Ai-Ai}}&\lesssim
      \tref^{\frac12} (\Aref h_T^{-1})^{\frac12}
      \, \Kmin^{\frac12} \norm{L^2(T;\Complex^m)}{v_T}
      \\
      \overset{\eqref{eq:tauT}}&\le
      \Kmin^{\frac12} \norm{L^2(T;\Complex^m)}{v_T},
    \end{aligned}
  \end{equation}
  so that $\tref^{-\frac12} \norm{L^2(T;\Complex^m)}{z_T} \lesssim \Kmin^{\frac12} \norm{L^2(T;\Complex^m)}{v_T} + \tref^{\frac12} \norm{L^2(T;\Complex^m)}{A_1 v_T}$.
  Squaring, using the fact that ${(a + b)^2} \le {2(a^2 + b^2)}$ for all $a,\,b \in \Real$ in the right-hand side, and recalling the definition~\eqref{eq:tnorm.h} of the triple norm proves~\eqref{eq:zh:L.norm.estimate}.

  We next show that
  \begin{equation}\label{eq:zh:tnorm.estimate}
    \tnorm{h}{\underline{z}_h}
    \lesssim \tnorm{h}{\underline{v}_h}
  \end{equation}
  by estimating each term in the definition~\eqref{eq:tnorm.h} of $\tnorm{h}{\underline{z}_h}$.
  For the first term, we write
  \[
  \begin{aligned}
    \Kmin \sum_{T \in \Th} \norm{0,T}{\underline{z}_T}^2
    \overset{\eqref{eq:norm.0.T}}=
    \Kmin \sum_{T \in \Th} \left(
    \norm{L^2(T;\Complex^m)}{z_T}^2 + h_T \norm{L^2(\partial T;\Complex^m)}{z_T}^2
    \right)
    \overset{\eqref{eq:tauT}}\lesssim
    \sum_{T \in \Th} \tref^{-1} \norm{L^2(T;\Complex^m)}{z_T}^2
    \overset{\eqref{eq:zh:L.norm.estimate}}
    \lesssim \tnorm{h}{\underline{v}_h}^2,
  \end{aligned}
  \]
  where, in the second step, we have additionally used a discrete trace inequality to write $h_T \norm{L^2(\partial T;\Complex^m)}{z_T}^2 \lesssim \norm{L^2(T;\Complex^m)}{z_T}^2$.
  We next notice that $\seminorm{{\rm b},h}{\underline{z}_h} = 0$ since $z_F = 0$ for all $F \in \Fhb$.
  Using the fact that $z_F = 0$ for all $F \in \Fh$ in~\eqref{eq:sh}, we get
  \[
  \begin{aligned}
    \seminorm{j,h}{\underline{z}_h}^2
    \overset{\eqref{eq:seminorm.S.b},\,\eqref{eq:sh}}&=
    \sum_{T \in \Th} \sum_{F \in \FT} (\mathcal{S}_{TF}^{\rm i} z_T, z_T)_F
    \\
    \overset{\eqref{eq:STFi:boundedness}}&\lesssim
    \sum_{T \in \Th} \Aref h_T^{-1} \norm{L^2(T;\Complex^m)}{z_T}^2
    \overset{\eqref{eq:tauT}}\le
    \sum_{T \in \Th} \tref^{-1} \norm{L^2(T;\Complex^m)}{z_T}^2
    \overset{\eqref{eq:zh:L.norm.estimate}}\lesssim
    \tnorm{h}{\underline{v}_h}^2,
  \end{aligned}
  \]
  where we have additionally used a discrete trace inequality in the first bound.
  Finally, for all $T \in \Th$, it holds, by $(\infty,2)$-H\"{o}lder inequalities,
  \[
  \begin{aligned}
    \sum_{T \in \Th} \tref \norm{L^2(T;\Complex^m)}{A_1 z_T}^2
    \overset{\eqref{eq:A1.A.tA}}&\lesssim
    \sum_{T \in \Th} \tref  \sum_{i = 1}^d \norm{L^\infty(T;\Complex^{m\times m})}{\mathcal{A}^i}^2 \norm{L^2(T;\Complex^m)}{\partial_i z_T}^2
    \\
    \overset{\eqref{eq:Aref.tref}}&\lesssim
    \sum_{T \in \Th} \tref (\Aref h_T^{-1})^2 \norm{L^2(T;\Complex^m)}{z_T}^2
    \overset{\eqref{eq:tauT}}\lesssim
    \sum_{T \in \Th} \tref^{-1} \norm{L^2(T;\Complex^m)}{z_T}^2
    \overset{\eqref{eq:zh:L.norm.estimate}}\lesssim
    \tnorm{h}{\underline{v}_h}^2,
  \end{aligned}
  \]
  where we have additionally used a discrete inverse inequality to write $\norm{L^2(T;\Complex^m)}{\partial_i z_T}^2 \lesssim h_T^{-2} \norm{L^2(T;\Complex^m)}{z_T}^2$ in the second step.
  This concludes the proof of~\eqref{eq:zh:tnorm.estimate}.
  \medskip \\
  \underline{(ii) \emph{Estimate of the directional derivative.}}
  Throughout the rest of the proof, we denote by $K > 0$ a generic constant the value of which can change at each occurrence and that additionally has the same dependencies as the hidden multiplicative constant in $\lesssim$; see Section~\ref{sec:scheme:inequalities}.
  Denote by $\$$ the supremum in the right-hand side of~\eqref{eq:inf-sup}.
  We start by noticing that
  \begin{equation}\label{eq:inf-sup:partial.coercivity}
    \tnorm{\flat,h}{\underline{v}_h}^2
    \overset{\eqref{eq:tnorm.h}}=
    \Kmin \sum_{T \in \Th} \norm{0,T}{\underline{v}_T}^2
    + \frac12 \seminorm{{\rm b},h}{\underline{v}_h}^2
    + \seminorm{j,h}{\underline{v}_h}^2
    \overset{\eqref{eq:ah:partial.coercivity}}\le \Re(a_h(\underline{v}_h, \underline{v}_h))
    \lesssim \$\, \tnorm{h}{\underline{v}_h}.
  \end{equation}
  It only remains to estimate $\term \coloneqq \sum_{T \in \Th} \tref \norm{L^2(T;\Complex^m)}{A_1 v_T}^2$.
  To this purpose, recalling the definitions~\eqref{eq:zh} of $\underline{z}_h$,~\eqref{eq:ah} of $a_h$, and~\eqref{eq:A1.A.tA} of $A$, we write
  \[
  \begin{aligned}
    \term
    &=
    a_h(\underline{v}_h, \underline{z}_h)
    - \sum_{T \in \Th} \left[
      (\mathcal{K} v_T, z_T)_T
      - \Kmin h_T \sum_{F \in \FT} (v_F - v_T, z_T)_F
      \right]
    \\
    &\quad
    + \sum_{T \in \Th} \tref (A_1 v_T, (A_1 - A_{1,T}^0) v_T)_T
    \\
    &\quad
    - \frac12 \sum_{T \in \Th} \sum_{F \in \FT} \omega_{TF} (\mathcal{N}_F (v_F - v_T), z_T)_F
    + \sum_{T \in \Th} \sum_{F \in \FT} (\mathcal{S}_{TF}^{\rm i}(v_F - v_T), z_T)_F.
  \end{aligned}
  \]
  We proceed to bound the terms $\term_1,\ldots,\term_5$ in the right-hand side.
  By definition of the supremum, we readily have
  \[
  \term_1 \le \$\, \tnorm{h}{\underline{z}_h}
  \overset{\eqref{eq:zh:tnorm.estimate}}\lesssim
  \$\,  \tnorm{h}{\underline{v}_h}.
  \]
  The second term is treated using $(\infty,2,2)$-H\"{o}lder and Cauchy--Schwarz inequalities and recalling the definition~\eqref{eq:rho} of $\rho$:
  \[
  \begin{aligned}
    \term_2
    &\le
    \sum_{T \in \Th} \Kmin^{\frac12} \left(
    \rho \norm{L^2(T;\Complex^m)}{v_T}
    + h_T^{\frac12} \sum_{F \in \FT} \norm{L^2(F;\Complex^m)}{v_F - v_T}
    \right) \,
    \Kmin^{\frac12} \left(
    \norm{L^2(T;\Complex^m)}{z_T}
    + h_T^{\frac12} \norm{L^2(\partial T;\Complex^m)}{z_T}
    \right)
    \\
    \overset{\eqref{eq:norm.0.T}}&\lesssim
    \rho
    \sum_{T \in \Th} \Kmin^{\frac12} \norm{0,T}{\underline{v}_T}
    ~ \Kmin^{\frac12} \norm{L^2(T;\Complex^m)}{z_T}
    \\
    \overset{\eqref{eq:inf-sup:partial.coercivity},\,\eqref{eq:zh:tnorm.estimate}}&\le
    \rho \left(
    \$\, \tnorm{h}{\underline{v}_h}
    \right)^{\frac12}~\tnorm{h}{\underline{v}_h}
    \le
    K \rho^2 \$\, \tnorm{h}{\underline{v}_h}
    + \frac{1}{12} \tnorm{h}{\underline{v}_h}^2,
  \end{aligned}
  \]
  where we have additionally used
  a discrete trace inequality together with the fact that $\rho \ge 1$ in the second step,
  a Cauchy--Schwarz inequality on the sum in the third step,
  while the conclusion follows from the generalized Young's inequality.
  For the third term, we use Cauchy--Schwarz inequalities to write
  \[
  \begin{aligned}
    \term_3
    &\lesssim \sum_{T \in \Th}
    \tref^{\frac12} \norm{L^2(T;\Complex^m)}{A_1 v_T}~
    \tref^{\frac12} \norm{L^2(T;\Complex^m)}{(A_1 - A_{1,T}^0) v_T}
    \\
    \overset{\eqref{eq:zh:tnorm.estimate:T1}}&\lesssim
    \sum_{T \in \Th} \tref^{\frac12} \norm{L^2(T;\Complex^m)}{A_1 v_T}~
    \Kmin^{\frac12} \norm{L^2(T;\Complex^m)}{v_T}
    \\
    &\le
    \left(
    \sum_{T \in \Th} \tref \norm{L^2(T;\Complex^m)}{A_1 v_T}^2
    \right)^{\frac12}
    \left(
    \sum_{T \in \Th} \Kmin \norm{L^2(T;\Complex^m)}{v_T}^2
    \right)^{\frac12}
    \le
    \frac12 \term
    + K \$\, \tnorm{h}{\underline{v}_h},
  \end{aligned}
  \]
  where, in the third step, we have used a Cauchy--Schwarz inequality on the sum and we have concluded using Young's inequality and~\eqref{eq:inf-sup:partial.coercivity}.
  For the fourth term, we use~\eqref{eq:STFi:control.NF} (after recalling that $\mathcal{N}_F$ is a Hermitian field to move it in front of the second argument) followed by a Cauchy--Schwarz inequality on the sums and a discrete trace inequality to write
  \[
  \begin{aligned}
    \term_4
    &\lesssim \seminorm{j,h}{\underline{v}_h}~
    \left(
    \sum_{T \in \Th} \Aref h_T^{-1} \norm{L^2(T;\Complex^m)}{z_T}^2
    \right)^{\frac12}
    \\
    \overset{\eqref{eq:tauT}}&\lesssim
    \seminorm{j,h}{\underline{v}_h}~
    \left(
    \sum_{T \in \Th} \tref^{-1} \norm{L^2(T;\Complex^m)}{z_T}^2
    \right)^{\frac12}
    \\
    \overset{\eqref{eq:zh:L.norm.estimate}}&\lesssim
    \seminorm{j,h}{\underline{v}_h} \, \tnorm{h}{\underline{v}_h}
    \overset{\eqref{eq:inf-sup:partial.coercivity}}\le
    K \$\, \tnorm{h}{\underline{v}_h}
    + \frac{1}{12} \tnorm{h}{\underline{v}_h}^2,
  \end{aligned}
  \]
  where, in the conclusion, we have additionally used the generalized Young's inequality.
  Finally, a Cauchy--Schwarz inequality readily gives
  \[
  \term_5
  \lesssim \seminorm{j,h}{\underline{v}_h}~\seminorm{j,h}{\underline{z}_h}
  \overset{\eqref{eq:inf-sup:partial.coercivity},\,\eqref{eq:zh:tnorm.estimate}}\lesssim
  \left(
  \$\, \tnorm{h}{\underline{v}_h}
  \right)^{\frac12} \tnorm{h}{\underline{v}_h}
  \overset{\eqref{eq:inf-sup:partial.coercivity}}\le
  K \$\, \tnorm{h}{\underline{v}_h}
  + \frac{1}{12} \tnorm{h}{\underline{v}_h}^2.
  \]
  Gathering the above estimates for $\term_1,\ldots,\term_5$, after rearranging the terms, multiplying by $2$, and noticing that $1 \lesssim \rho$, we end up with
  \begin{equation}\label{eq:inf-sup:frak.T}
    \term
    \le
    K \rho^2 \$\, \tnorm{h}{\underline{v}_h}
    + \frac12 \tnorm{h}{\underline{v}_h}^2.
  \end{equation}
  \underline{(iii) \emph{Conclusion.}}
  The conclusion follows summing~\eqref{eq:inf-sup:frak.T} to~\eqref{eq:inf-sup:partial.coercivity} and simplifying.
\end{proof}

\subsection{Consistency}

\begin{lemma}[Estimate of the consistency error]\label{lem:consistency}
  Under the assumptions of Lemma~\ref{lem:inf-sup}, let $w \in V_0 \cap H^s(\Omega;\Complex^m)$ with $s > \frac12$, set $\huline{w}_h \coloneqq \Ih w$, and define the consistency error
  \begin{equation}\label{eq:Err}
    \Err(w;\underline{v}_h)
    \coloneqq (A w, v_h)_\Omega - a_h(\huline{w}_h, \underline{v}_h).
  \end{equation}
  It holds, denoting by $\tnorm{h,*}{\cdot}$ the norm dual to $\tnorm{h}{\cdot}$ and recalling~\eqref{eq:rho},
  \[
  \tnorm{h,*}{\Err(w;\cdot)}
  \lesssim \left(
  \sum_{T \in \Th} (\Kmax \rho + \tref^{-1}) \norm{L^2(T;\Complex^m)}{w - \widehat{w}_T}^2
  + \sum_{T \in \Th} (\Aref + \Kmin h_T) \norm{L^2(\partial T;\Complex^m)}{w - \widehat{w}_T}^2
  \right)^{\frac12}.
  \]
\end{lemma}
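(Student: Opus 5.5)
The plan is to rewrite $\Err(w;\underline{v}_h)$ through a continuous analogue of the reformulation~\eqref{eq:ah:reformulation.stability}, so that every term involves $w-\widehat{w}_T$ or $w-\widehat{w}_F$. Since $w\in H^s$ with $s>\frac12$, its traces are single-valued on interfaces. Because $w\in V_0$, we have $(\mathcal{M}_F-\mathcal{N}_F)w=0$ on boundary faces, and hence by~\eqref{eq:SFb:stabilization} also $(\mathcal{M}_F+\mathcal{S}_F^{\rm b}-\mathcal{N}_F)w=0$.

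First, apply the local integration by parts~\eqref{eq:ibp:local} elementwise to $w$ and $v_T$. This gives
\[
(Aw,v_h)_\Omega=\tfrac12\sum_T\big[(Aw,v_T)_T+(w,\tilde A v_T)_T\big]+\tfrac12\sum_T\sum_{F\in\FT}\omega_{TF}(\mathcal{N}_F w,v_T)_F .
\]
Next, insert the zero term $-\frac12\sum_T\sum_F\omega_{TF}(\mathcal{N}_F w,v_F)_F+\frac12\sum_{F\in\Fhb}(\mathcal{N}_Fw,v_F)_F$. It vanishes by single-valuedness, exactly as in the proof of Proposition~\ref{prop:ibp:global}. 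Because $w$ lies in the kernel above, the boundary contribution can be rewritten as $\frac12\sum_{F\in\Fhb}((\mathcal{M}_F+\mathcal{S}_F^{\rm b})w,v_F)_F$.

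Subtracting~\eqref{eq:ah:reformulation.stability} with $\underline{w}_h=\huline{w}_h$ then yields $\Err=\sum_{i}\term_i$, where, writing $\eta_T\coloneqq w-\widehat{w}_T$ and $\eta_F\coloneqq w-\widehat{w}_F$:
\begin{itemize}
\item[(a)] $\frac12\sum_T[(A\eta_T,v_T)_T+(\eta_T,\tilde A v_T)_T]$, which by~\eqref{eq:A1.A.tA} equals $\frac12\sum_T[((\mathcal{K}-\mathcal{K}^\herm+\nabla\cdot\mathcal{A})\eta_T,v_T)_T]+\sum_T(A_1\eta_T,v_T)_T$ plus a symmetric counterpart. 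Cleaner: integrate $(A_1\eta_T,v_T)_T$ by parts once more, producing $-(\eta_T,A_1v_T)_T$ together with boundary terms $(\eta_T,\mathcal{N}_Fv_T)_F$.
\item[(b)] $-\Kmin\sum_T h_T\sum_F(\eta_F-\eta_T,v_F-v_T)_F$.
\item[(c)] the face terms $\frac12\sum\omega_{TF}[(\mathcal{N}_F\eta_F,v_T)_F-(\mathcal{N}_F\eta_T,v_F)_F]$.
\item[(d)] $\frac12\sum_{\Fhb}((\mathcal{M}_F+\mathcal{S}^{\rm b}_F)\eta_F,v_F)_F$.
\item[(e)] $-j_h(\huline{w}_h,\underline{v}_h)$, written with $\widehat{w}_F-\widehat{w}_T=\pi_F^k(\eta_T)$ up to projection issues, i.e.\ bounded by $\|\eta_T\|_{L^2(F)}$ since $\widehat{w}_F-\widehat{w}_T=\lproj{k}{F}(w-\widehat{w}_T)$.
\end{itemize}

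Each term is then bounded against $\tnorm{h}{\underline{v}_h}$. The zero-order terms are bounded by $\Kmax\|\eta_T\|\,\|v_T\|\le(\Kmax\rho)^{1/2}\|\eta_T\|\,\Kmin^{1/2}\|v_T\|$, using $\Kmax\le(\Kmax\rho\Kmin)^{1/2}$. For $(\eta_T,A_1v_T)_T$ we use $\tref^{-1/2}\|\eta_T\|\,\tref^{1/2}\|A_1v_T\|$. Term (b) is handled by Cauchy--Schwarz, giving $(\Kmin h_T)^{1/2}\|\eta\|_{\partial T}$ times $\Kmin^{1/2}|\underline{v}_T|_{0,\partial T}$, with $\|\eta_F\|_F\le\|\eta_T\|_F$ by the projection property. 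Term (e) uses Cauchy--Schwarz with~\eqref{eq:STFi:boundedness}, giving $\Aref^{1/2}\|\eta_T\|_{\partial T}\,|\underline{v}_h|_{j,h}$. Term (d) uses~\eqref{eq:SFb:control.NF}, or simply the positivity of $\mathcal{M}_F+\mathcal{S}^{\rm b}_F$ and a weighted Cauchy--Schwarz inequality, giving $\Aref^{1/2}\|\eta_F\|_F|\underline{v}_h|_{{\rm b},h}$.

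The delicate part is the face terms in (c), together with the boundary terms from the second integration by parts. These must be recombined into pieces of the form $(\eta,\mathcal{N}_F(v_F-v_T))_F$, which~\eqref{eq:STFi:control.NF} bounds by $\Aref^{1/2}\|\eta\|_F\,(\mathcal{S}^{\rm i}_{TF}(v_F-v_T),v_F-v_T)_F^{1/2}$. Alternatively, $(\mathcal{N}_F\eta_F,\cdot)$ vanishes against polynomials when $\mathcal{N}_F$ is replaced by a piecewise constant, but that is not available in general. I therefore aim to choose the integration-by-parts split so that only jumps $v_F-v_T$ appear.

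Collecting all terms and applying a discrete Cauchy--Schwarz inequality over elements gives the stated bound, where the weights $\Kmax\rho+\tref^{-1}$ and $\Aref+\Kmin h_T$ appear as above.
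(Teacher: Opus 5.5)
Your splitting (a)--(e) is algebraically correct, but the proposal stops exactly at the step the lemma depends on. You say the face terms ``must be recombined'' into jumps $v_F-v_T$ without doing it. Meanwhile you bound (d) on its own, which Assumption~\ref{ass:stabilization} does not allow. Indeed, \eqref{eq:SFb:control.NF} controls $\mathcal{M}_F+\mathcal{S}_F^{\rm b}+\mathcal{N}_F$, not $\mathcal{M}_F+\mathcal{S}_F^{\rm b}$. Your weighted Cauchy--Schwarz alternative would need $\mathcal{M}_F+\mathcal{S}_F^{\rm b}$ to be Hermitian and bounded by $\Aref[T_F]$, and neither is assumed. In the scalar example of Section~\ref{subsubsec:scalar-dar}, $\mathcal{M}_\dirichlet$ is skew-symmetric, so $\Re((\mathcal{M}_\dirichlet+\mathcal{S}_F^{\rm b})v_F,v_F)_F$ only sees the potential component of $v_F$. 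By contrast, $((\mathcal{M}_\dirichlet+\mathcal{S}_F^{\rm b})\eta_F,v_F)_F$ pairs $\eta_F$ with the flux component of $v_F$, which $\seminorm{{\rm b},h}{\underline{v}_h}$ does not control. Similarly, the piece $\frac12\sum_{T\in\Th}\sum_{F\in\FT}\omega_{TF}(\mathcal{N}_F\eta_F,v_T)_F$ of (c) is not of jump form, so \eqref{eq:STFi:control.NF} cannot be applied to it.

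The missing idea is to use that $\eta_F=w-\widehat{w}_F$ is single-valued on interfaces. After integrating $(A\eta_T,v_T)_T$ by parts, (a) becomes $\sum_{T}(\eta_T,\tilde{A}v_T)_T+\frac12\sum_T\sum_{F\in\FT}\omega_{TF}(\mathcal{N}_F\eta_T,v_T)_F$. Its face part combines with the second half of (c) into $-\frac12\sum_T\sum_{F\in\FT}\omega_{TF}(\eta_T,\mathcal{N}_F(v_F-v_T))_F$. For the first half of (c), since $\omega_{T_1F}+\omega_{T_2F}=0$ on interfaces,
\[
\frac12\sum_{T\in\Th}\sum_{F\in\FT}\omega_{TF}(\mathcal{N}_F\eta_F,v_T)_F
=-\frac12\sum_{T\in\Th}\sum_{F\in\FT}\omega_{TF}(\eta_F,\mathcal{N}_F(v_F-v_T))_F
+\frac12\sum_{F\in\Fhb}(\mathcal{N}_F\eta_F,v_F)_F .
\]
The boundary sum must then be merged with (d) into $\frac12\sum_{F\in\Fhb}((\mathcal{M}_F+\mathcal{S}_F^{\rm b}+\mathcal{N}_F)\eta_F,v_F)_F$, which is exactly what \eqref{eq:SFb:control.NF} bounds. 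With this step your terms coincide with the paper's $\term_1,\ldots,\term_5$, and your individual estimates go through.

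The paper avoids the detour. It applies \eqref{eq:ibp:global} to the first term of~\eqref{eq:ah}, rather than starting from the symmetric reformulation of $a_h$. It writes $(Aw,v_h)_\Omega$ in fully integrated-by-parts form and adds $\frac12\sum_{F\in\Fhb}((\mathcal{M}_F+\mathcal{S}_F^{\rm b}-\mathcal{N}_F)w,v_F)_F=0$. All derivatives then fall on $v_T$, and the jump terms and the boundary combination $\mathcal{M}_F+\mathcal{S}_F^{\rm b}+\mathcal{N}_F$ appear directly.
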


\begin{proof}
  We reformulate the contributions in~\eqref{eq:Err} so as to make the differences $(w - \widehat{w}_Y)_{Y \in \Th \cup \Fh}$ appear.
  Since the quantity $\mathcal{N}_F w$ is single-valued at interfaces (see~\cite[Corollary 60.2]{Ern.Guermond:21*2}), we have
  \[
  0 = \sum_{F \in \Fhb} (\mathcal{N}_F w, v_F)_F
  - \sum_{T \in \Th} \sum_{F \in \FT} \omega_{TF} (w, \mathcal{N}_F v_F)_F,
  \]
  where we have additionally used the fact that the fields $(\mathcal{N}_F)_{F \in \Fh}$ are Hermitian to move $\mathcal{N}_F$ in front of the second argument in the second term.
  Summing~\eqref{eq:ibp:local} with $v = v_T$ over $T \in \Th$ and adding the above expression, we get
  \[
  (A w, v_h)_\Omega
  = \sum_{T \in \Th} (w, \tilde{A} v_T)_T
  - \sum_{T \in \Th} \sum_{F \in \FT} \omega_{TF} (w, \mathcal{N}_F(v_F - v_T))_F
  + \sum_{F \in \Fhb} (\mathcal{N}_F w, v_F)_F.
  \]
  Moreover, since $w \in V_0$, by~\eqref{eq:SFb:stabilization} we have
  \[
  \frac12 \sum_{F \in \Fhb} (\mathcal{M}_F + \mathcal{S}_F^{\rm b} - \mathcal{N}_F) w, v_F)_F = 0.
  \]
  Summing the above relations, we get
  \begin{equation}\label{eq:Eh:reformulation:T1}
    (A w, v_h)_\Omega
    = \sum_{T \in \Th} (w, \tilde{A} v_T)_T
    - \sum_{T \in \Th} \sum_{F \in \FT} \omega_{TF} (w, \mathcal{N}_F(v_F - v_T))_F
    \frac12 \sum_{F \in \Fhb} (\mathcal{M}_F + \mathcal{S}_F^{\rm b} + \mathcal{N}_F) w, v_F)_F.
  \end{equation}
  On the other hand, using~\eqref{eq:ibp:global} to replace the first term in the expression~\eqref{eq:ah} of $a_h$ and letting $\underline{w}_h = \huline{w}_h$, we get
  \begin{equation}\label{eq:Eh:reformulation:T2}
    \begin{aligned}
      a_h(\huline{w}_h, \underline{v}_h)
      &\coloneqq
      \sum_{T \in \Th} (\widehat{w}_T, \tilde{A} v_T)_T
      + \Kmin \sum_{T \in \Th} h_T (\widehat{w}_F - \widehat{w}_T, v_F - v_T)_F
      \\
      &\quad
      - \sum_{T \in \Th} \sum_{F \in \FT} \omega_{TF} \left(
      \frac{\widehat{w}_F + \widehat{w}_T}{2}, \mathcal{N}_F (v_F - v_T)
      \right)_F
      \\
      &\quad
      + \frac12 \sum_{F \in \Fhb} ((\mathcal{M}_F + \mathcal{S}_F^{\rm b} + \mathcal{N}_F) \widehat{w}_F, v_F)_F
      + j_h(\huline{w}_h, \underline{v}_h).
    \end{aligned}
  \end{equation}
  In conclusion, subtracting~\eqref{eq:Eh:reformulation:T2} from~\eqref{eq:Eh:reformulation:T1}, we write
  \[
  \Err(w;\underline{v}_h) = \term_1 + \cdots + \term_5
  \]
  with
  \[
  \begin{aligned}
    \term_1 &\coloneqq
    \sum_{T \in \Th} (w - \widehat{w}_T, \tilde{A} v_T)_T,
    \\
    \term_2 &\coloneqq
    \Kmin \sum_{T \in \Th} h_T \sum_{F \in \FT} (w - \widehat{w}_T, v_F - v_T)_F,
    \\
    \term_3 &\coloneqq
    - \frac12 \sum_{T \in \Th} \sum_{F \in \FT} \omega_{TF} ((w - \widehat{w}_F) + (w - \widehat{w}_T), \mathcal{N}_F (v_F - v_T))_F,
    \\
    \term_4 &\coloneqq
    \frac12 \sum_{F \in \Fhb} ((\mathcal{M}_F + \mathcal{S}_F^{\rm b} + \mathcal{N}_F) (w - \widehat{w}_F), v_F)_F,
    \\
    \term_5 &\coloneqq
    j_h(\huline{w}_h, \underline{v}_h),
  \end{aligned}
  \]
  where we have noticed that $v_F - (v_T)_{|F} \in \Poly{k}(F;\Complex^m)$ and used the definition of $\lproj{k}{F}$ to replace $\widehat{w}_F = \lproj{k}{F} w$ with $w$ in $\term_2$.
  We proceed to estimate the above terms.
  By definition~\eqref{eq:A1.A.tA} of $\tilde{A}$, we have
  \[
  \begin{aligned}
    \term_1
    &= \sum_{T \in \Th} (w - \widehat{w}_T, (\mathcal{K}^\herm - \nabla \cdot \mathcal{A}) v_T)_T
    - \sum_{T \in \Th} (w - \widehat{w}_T, A_1 v_T)_T
    \\
    \overset{\eqref{eq:rho}}&\le
    \sum_{T \in \Th}
    \big(\Kmax \Kmin^{-\frac12} + \tref^{-\frac12}\big) \norm{L^2(T;\Complex^m)}{w - \widehat{w}_T}
    \left(
    \Kmin^{\frac12} \norm{L^2(T;\Complex^m)}{v_T}
    + \tref^{\frac12} \norm{L^2(T;\Complex^m)}{A_1 v_T}
    \right)
    \\
    \overset{\eqref{eq:rho},\,\eqref{eq:tnorm.h},\,\eqref{eq:norm.0.T}}&\lesssim
    \left(
    \sum_{T \in \Th} (\Kmax \rho + \tref^{-1}) \norm{L^2(T;\Complex^m)}{w - \widehat{w}_T}^2
    \right)^{\frac12}
    \tnorm{h}{\underline{v}_h},
  \end{aligned}
  \]
  where we have used $(2,\infty,2)$-H\"{o}lder and Cauchy--Schwarz inequalities on the integrals in the first step and Cauchy--Schwarz inequalities on the sums to conclude.

  Using Cauchy--Schwarz inequalities on the integrals and the sums and recalling~\eqref{eq:tnorm.h}, we have
  \[
  \term_2 \le \left(
  \Kmin \sum_{T \in \Th} h_T \norm{L^2(\partial T;\Complex^m)}{w - \widehat{w}_T}^2
  \right)^{\frac12} \tnorm{h}{\underline{v}_h}.
  \]

  Using~\eqref{eq:STFi:control.NF}, the third term can be bounded as follows:
  \[
  \begin{aligned}
    \term_3
    &\lesssim \sum_{T \in \Th} \sum_{F \in \FT} \Aref^{\frac12} \left(
    \norm{L^2(F;\Complex^m)}{w - \widehat{w}_F}
    + \norm{L^2(F;\Complex^m)}{w - \widehat{w}_T}
    \right)~(\mathcal{S}_{TF}^{\rm i} (v_F - v_T), v_F - v_T)_F^{\frac12}
    \\
    &\lesssim \left(
    \sum_{T \in \Th} \Aref \norm{L^2(\partial T;\Complex^m)}{w - \widehat{w}_T}^2
    \right)^{\frac12}
    \seminorm{j,h}{\underline{v}_h}
    \overset{\eqref{eq:tnorm.h}}\le \left(
    \sum_{T \in \Th} \Aref \norm{L^2(\partial T;\Complex^m)}{w - \widehat{w}_T}^2
    \right)^{\frac12}
    \tnorm{h}{\underline{v}_h}.
  \end{aligned}
  \]
  To pass to the second line, we have used a Cauchy--Schwarz inequality on the sums and noticed that
  \begin{equation}\label{eq:w-hwF:estimate}
    \norm{L^2(F;\Complex^m)}{w - \widehat{w}_F}
    \le \norm{L^2(F;\Complex^m)}{w - \widehat{w}_T}
    + \norm{L^2(F;\Complex^m)}{\lproj{k}{F}(w - \widehat{w}_T)}
    \lesssim \norm{L^2(F;\Complex^m)}{w - \widehat{w}_T},
  \end{equation}
  where we have inserted $\lproj{k}{F} \widehat{w}_T - \widehat{w}_T = 0$ (since $(\widehat{w}_T)_{|F} \in \Poly{k}(F;\Complex^m)$ and $\lproj{k}{F}$ is idempotent) into the norm and used a triangle inequality in the first step,
  while the conclusion follows from the fact that the $L^2$-projector is (trivially) $L^2$-bounded.

  For the fourth term, using Cauchy--Schwarz inequalities followed by~\eqref{eq:SFb:control.NF} and~\eqref{eq:seminorm.S.b}, we readily obtain
  \[
  \term_4 \lesssim \left(
  \sum_{F \in \Fhb} \Aref[T_F] \norm{L^2(F;\Complex^m)}{w - \widehat{w}_{T_F}}^2
  \right)^{\frac12}
  \seminorm{{\rm b},h}{\underline{v}_h}
  \overset{\eqref{eq:tnorm.h}}\le
  \left(
  \sum_{F \in \Fhb} \Aref[T_F] \norm{L^2(F;\Complex^m)}{w - \widehat{w}_{T_F}}^2
  \right)^{\frac12}
  \tnorm{h}{\underline{v}_h}.
  \]

  Finally, for the fifth term, we simply write, using Cauchy--Schwarz inequalities together with~\eqref{eq:STFi:boundedness},
  \[
  \term_5 \lesssim
  \left(
  \sum_{T \in \Th} \Aref \sum_{F \in \FT} \norm{L^2(F;\Complex^m)}{\widehat{w}_F - \widehat{w}_T}^2
  \right)^{\frac12}
  \seminorm{j,h}{\underline{v}_h}
  \le
  \left(
  \sum_{T \in \Th} \Aref \norm{L^2(\partial T;\Complex^m)}{w - \widehat{w}_T}^2
  \right)^{\frac12}
  \tnorm{h}{\underline{v}_h},
  \]
  where, for the first factor we have inserted $\pm w$ into the norm and used a triangle inequality to write
  \[
  \norm{L^2(F;\Complex^m)}{\widehat{w}_F - \widehat{w}_T}
  \le \norm{L^2(F;\Complex^m)}{w - \widehat{w}_F}
  + \norm{L^2(F;\Complex^m)}{w - \widehat{w}_T}
  \overset{\eqref{eq:w-hwF:estimate}}\lesssim
  \norm{L^2(F;\Complex^m)}{w - \widehat{w}_T},
  \]
  while the second factor was estimated using the definition~\eqref{eq:tnorm.h} of $\tnorm{h}{\underline{v}_h}$.
  The conclusion follows using the above bounds for $\term_1,\ldots,\term_4$ in the definition of the dual norm.
\end{proof}

\subsection{Error estimate}

\begin{theorem}[Error estimate]\label{th:err-est}
  Denote by $u \in V_0$ the solution of the weak problem~\eqref{eq:weak}, and further assume that $u \in H^s(\Omega;\Complex^m)$ for some $s > \frac12$.
  Then, under the assumptions of Lemma~\ref{lem:consistency}, it holds,
  \begin{multline}\label{eq:error.estimate}
    \tnorm{h}{\underline{u}_h - \Ih u}
    \\
    \lesssim
    \rho^2 \left(
    \sum_{T \in \Th} (\Kmax \rho + \tref^{-1}) \norm{L^2(T;\Complex^m)}{u - \lproj{k}{T}u}^2
    + \sum_{T \in \Th} (\Aref + \Kmin h_T) \norm{L^2(\partial T;\Complex^m)}{u - \lproj{k}{T}u}^2
    \right)^{\frac12}.
  \end{multline}
  If, additionally, for all $T \in \Th$, $u_{|T} \in H^r(T;\Complex^m)$ for some $r \in \{ 1, \ldots, k\}$,
  \begin{equation}\label{eq:convergence.rate}
    \tnorm{h}{\underline{u}_h - \Ih u}
    \lesssim
    \rho^2 \left[
      \sum_{T \in \Th} \left(
      (\Kmax \rho + \tref^{-1}) h_T^{2(r + 1)} \seminorm{H^r(T;\Complex^m)}{u}^2
      + (\Aref + \Kmin h_T) h_T^{2r + 1} \seminorm{H^r(T;\Complex^m)}{u}^2
      \right)
      \right]^{\frac12}.
  \end{equation}
\end{theorem}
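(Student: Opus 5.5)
The plan is the standard Strang-type argument: combine the inf-sup condition of Lemma~\ref{lem:inf-sup} with the consistency bound of Lemma~\ref{lem:consistency}. First, apply the inf-sup condition~\eqref{eq:inf-sup} to $\underline{v}_h \coloneqq \underline{u}_h - \Ih u \in \Uh$. This gives
\[
\tnorm{h}{\underline{u}_h - \Ih u} \lesssim \rho^2 \sup_{\underline{w}_h \in \Uh\setminus\{\underline{0}\}} \frac{|a_h(\underline{u}_h - \Ih u, \underline{w}_h)|}{\tnorm{h}{\underline{w}_h}}.
\]

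Next, I identify the numerator with the consistency error. By the discrete problem~\eqref{eq:discrete}, $a_h(\underline{u}_h,\underline{w}_h) = (f, w_h)_\Omega$. Since $u\in V_0$ solves~\eqref{eq:weak} and $w_h \in L^2(\Omega;\Complex^m)$ is an admissible test function, we also have $(f,w_h)_\Omega = (Au, w_h)_\Omega$. Hence
\[
a_h(\underline{u}_h - \Ih u, \underline{w}_h) = (Au, w_h)_\Omega - a_h(\Ih u, \underline{w}_h) = \Err(u;\underline{w}_h).
\]
The supremum is therefore exactly $\tnorm{h,*}{\Err(u;\cdot)}$. Lemma~\ref{lem:consistency} applies with $w = u \in V_0\cap H^s(\Omega;\Complex^m)$ and $\widehat{w}_T = \lproj{k}{T} u$, which yields~\eqref{eq:error.estimate} directly. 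One point to check is that the proof of the consistency lemma only used the lower bound on the stabilization fields, so its hypotheses are met.

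For~\eqref{eq:convergence.rate}, I insert the approximation properties of the local $L^2$-orthogonal projector on mesh-regular polytopal elements into~\eqref{eq:error.estimate}, citing~\cite[Theorem~1.45]{Di-Pietro.Droniou:20}. These give, for $u_{|T}\in H^{r+1}(T)$ (presumably what the regularity index is meant to encode),
\[
\norm{L^2(T;\Complex^m)}{u - \lproj{k}{T}u} \lesssim h_T^{r+1}\seminorm{H^{r+1}(T;\Complex^m)}{u},
\qquad
\norm{L^2(\partial T;\Complex^m)}{u - \lproj{k}{T}u} \lesssim h_T^{r+\frac12}\seminorm{H^{r+1}(T;\Complex^m)}{u}.
\]
The boundary estimate uses the continuous trace inequality. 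Substituting these bounds element by element gives the factors $h_T^{2(r+1)}$ and $h_T^{2r+1}$.

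The main obstacle is only bookkeeping. The seminorm index in the statement should match the regularity needed for the stated powers of $h_T$. The hidden constants must depend only on the admissible quantities listed in Section~\ref{sec:scheme:inequalities}, which holds because the projector estimates depend only on $d$, the mesh regularity and $k$.
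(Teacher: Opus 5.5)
Your proposal is correct and follows the paper's proof: the paper cites the Third Strang Lemma, which is exactly your argument of applying the inf-sup condition to $\underline{u}_h - \Ih u$ and identifying $a_h(\underline{u}_h - \Ih u,\cdot)$ with $\Err(u;\cdot)$, followed by the approximation properties of $\lproj{k}{T}$. You read the regularity hypothesis as $u_{|T}\in H^{r+1}(T;\Complex^m)$, with seminorm $\seminorm{H^{r+1}(T;\Complex^m)}{u}$, and this is the reading consistent with the stated powers of $h_T$ and with Remark~\ref{rem:convergence.rates}; the hypotheses of Lemma~\ref{lem:consistency} hold simply because the theorem assumes them, since that lemma uses the full Assumption~\ref{ass:stabilization} and not only lower bounds.
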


\begin{proof}
  The basic error estimate~\eqref{eq:error.estimate} is an immediate consequence of Lemmas~\ref{lem:inf-sup} and~\ref{lem:consistency} together with the Third Strang Lemma~\cite{Di-Pietro.Droniou:18}.
  The estimate of the convergence rate~\eqref{eq:convergence.rate} follows from the approximation properties of $\lproj{k}{T}$; see~\cite{Di-Pietro.Droniou:17}.
\end{proof}

\begin{remark}[Convergence rates]\label{rem:convergence.rates}
  The right-hand side of~\eqref{eq:convergence.rate} converges asymptotically as $h^{r+\frac12}$.
  Notice, however, that, for coarse enough meshes, one can have $\Aref \le \Kmin h_T$, and therefore observe a pre-asymptotic convergence rate of $h^{r+1}$ for the error.
  From a physical standpoint, this can be interpreted as a dominant-reaction regime.
\end{remark}


\section{Numerical tests}\label{sec:numerical.tests}

This section contains an extensive numerical validation of the proposed method on three-dimensional configurations.
The  implementation is based on the \verb|C++| library \verb|HarDCore|\footnote{https://github.com/jdroniou/HArDCore}.
The sparse solver \verb|BiCGSTAB| of the \verb|Eigen| library is used to solve the statically condended linear systems.
In Sections~\ref{subsec:num-scalar_DAR} and~\ref{subsec:num-vector_DAR}, we numerically assess the convergence rates for the scalar and vector diffusion-advection-reaction equations respectively described in Sections~\ref{subsubsec:scalar-dar} and~\ref{subsubsec:vector-dar}.
In Section~\ref{subsec:magn.field.diff.conv}, we consider a benchmark configuration in magnetohydrodynamics inspired by~\cite{Perry.Jones:78}, corresponding to the expulsion of the magnetic field from a rotating cilinder, and show that the proposed scheme captures this physical phenomenon.

\subsection{Scalar advection-diffusion-reaction}\label{subsec:num-scalar_DAR}

We consider here problem~\eqref{eq:scalar_DAR} with
\[
\Omega = (0,1)^3,\qquad
\Gamma_\dirichlet = \partial \Omega,\qquad
\kappa = \mathcal{I}_3,\qquad
\beta = \begin{bmatrix}1 & 1 & 1
\end{bmatrix}^\trans,\qquad
\mu=1.
\]
The source term $f$ and the Dirichlet boundary datum are set so that the solution corresponds to the scalar potential
\[
p = \sin(\pi x) \sin(\pi y) \sin(\pi z).
\]
The penalty fields ared designed as in Remark \ref{rmk:penalty-fields}.
The test is performed considering two regular mesh families,
respectively composed of simplicial and general polyhedral elements.
The trend of the error $\tnorm{h}{\Ih{u}-\underline{u}_h}$ with respect to the mesh diameter $h$
is recorded for several values of the degree $k$. The results are summarized in Figure \ref{fig:trend_scalar_DAR}.
The observed convergence rates match or exceed the ones resulting from Theorem~\ref{th:err-est}.
Specifically, for the simplicial mesh family, the error decay rate is roughly $k + 1$.
This could be due to the fact that the asymptotic regime is not yet achieved for the finest mesh in our family (see Remark~\ref{rem:convergence.rates}), and possibly also to some super-convergence phenomenon linked to the fact that this mesh is mostly structured.
For the polyhedral mesh family, an order of convergence of roughly $k + \frac12$ is observed for $k \neq 1$, while a higher-than-expected convergence rate of $2.51$ is obtained for the last mesh refinement for $k = 1$.
This could be due both to pre-asymptotic behaviour and possibly a super-convergence phenomenon which will require further investigation.

\begin{figure}
  \begin{subfigure}[t]{\textwidth}\centering
    \begin{tikzpicture}
      \begin{loglogaxis}[height=6.5cm]
\addplot table[x=meshsize,y=H1TypeError] {scalar_DAR_tet-data_rates-0.dat} node[font = {\small}, pos = 0.166666666666667, above=2pt]{1.17}node[font = {\small}, pos = 0.5, above=2pt]{0.76}node[font = {\small}, pos = 0.833333333333333, above=2pt]{1.18};
\addplot table[x=meshsize,y=H1TypeError] {scalar_DAR_tet-data_rates-1.dat} node[font = {\small}, pos = 0.166666666666667, above=2pt]{2.29}node[font = {\small}, pos = 0.5, above=2pt]{1.48}node[font = {\small}, pos = 0.833333333333333, above=2pt]{2.25};
\addplot table[x=meshsize,y=H1TypeError] {scalar_DAR_tet-data_rates-2.dat} node[font = {\small}, pos = 0.166666666666667, above=2pt]{2.69}node[font = {\small}, pos = 0.5, above=2pt]{2.45}node[font = {\small}, pos = 0.833333333333333, above=2pt]{3.13};
\addplot table[x=meshsize,y=H1TypeError] {scalar_DAR_tet-data_rates-3.dat} node[font = {\small}, pos = 0.166666666666667, above=2pt]{3.93}node[font = {\small}, pos = 0.5, above=2pt]{3.06}node[font = {\small}, pos = 0.833333333333333, above=2pt]{4.06};
\logLogSlopeTriangle{0.90}{0.4}{0.1}{0}{black};
\logLogSlopeTriangle{0.90}{0.4}{0.1}{1.5}{black};
\logLogSlopeTriangle{0.90}{0.4}{0.1}{2.5}{black};
\logLogSlopeTriangle{0.90}{0.4}{0.1}{3.5}{black};
      \end{loglogaxis}
    \end{tikzpicture}
    \hspace{1cm}
    \includegraphics[height=5.5cm]{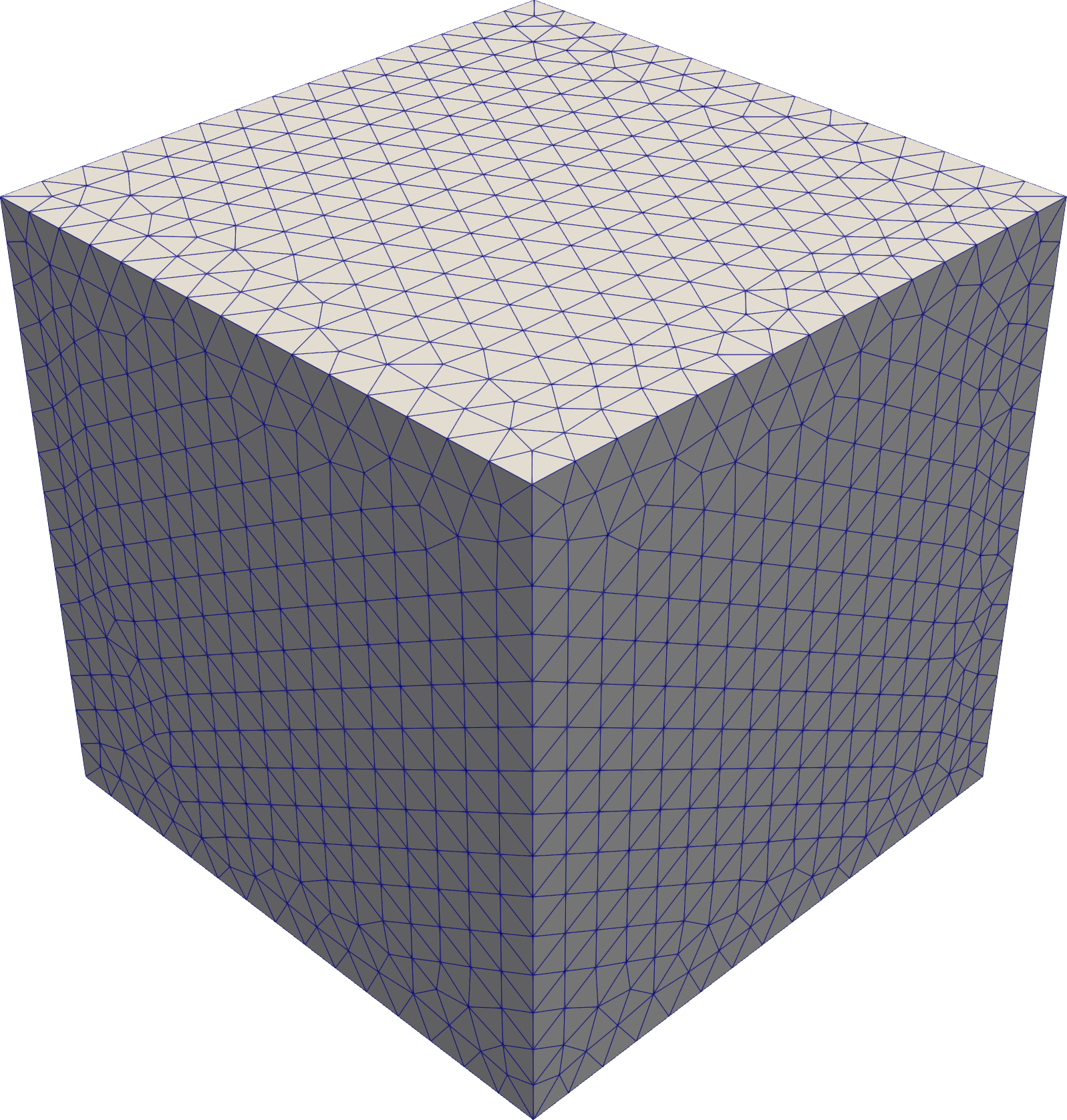}
    \caption{Simplicial mesh family}
  \end{subfigure}
  \medskip\\
  \begin{subfigure}[t]{\textwidth}\centering
    \begin{tikzpicture}
      \begin{loglogaxis}[height=6.5cm]
\addplot table[x=meshsize,y=H1TypeError] {scalar_DAR_voro-data_rates-0.dat} node[font = {\small}, pos = 0.166666666666667, above=2pt]{0.36}node[font = {\small}, pos = 0.5, above=2pt]{0.71}node[font = {\small}, pos = 0.833333333333333, above=2pt]{0.60};
\addplot table[x=meshsize,y=H1TypeError] {scalar_DAR_voro-data_rates-1.dat} node[font = {\small}, pos = 0.166666666666667, above=2pt]{1.63}node[font = {\small}, pos = 0.5, above=2pt]{1.96}node[font = {\small}, pos = 0.833333333333333, above=2pt]{2.51};
\addplot table[x=meshsize,y=H1TypeError] {scalar_DAR_voro-data_rates-2.dat} node[font = {\small}, pos = 0.166666666666667, above=2pt]{3.27}node[font = {\small}, pos = 0.5, above=2pt]{2.27}node[font = {\small}, pos = 0.833333333333333, above=2pt]{2.80};
\addplot table[x=meshsize,y=H1TypeError] {scalar_DAR_voro-data_rates-3.dat} node[font = {\small}, pos = 0.166666666666667, above=2pt]{3.68}node[font = {\small}, pos = 0.5, above=2pt]{4.31}node[font = {\small}, pos = 0.833333333333333, above=2pt]{3.62};
\logLogSlopeTriangle{0.90}{0.4}{0.1}{0}{black};
\logLogSlopeTriangle{0.90}{0.4}{0.1}{1.5}{black};
\logLogSlopeTriangle{0.90}{0.4}{0.1}{2.5}{black};
\logLogSlopeTriangle{0.90}{0.4}{0.1}{3.5}{black};
      \end{loglogaxis}
    \end{tikzpicture}
    \hspace{1cm}
    \includegraphics[height=5.5cm]{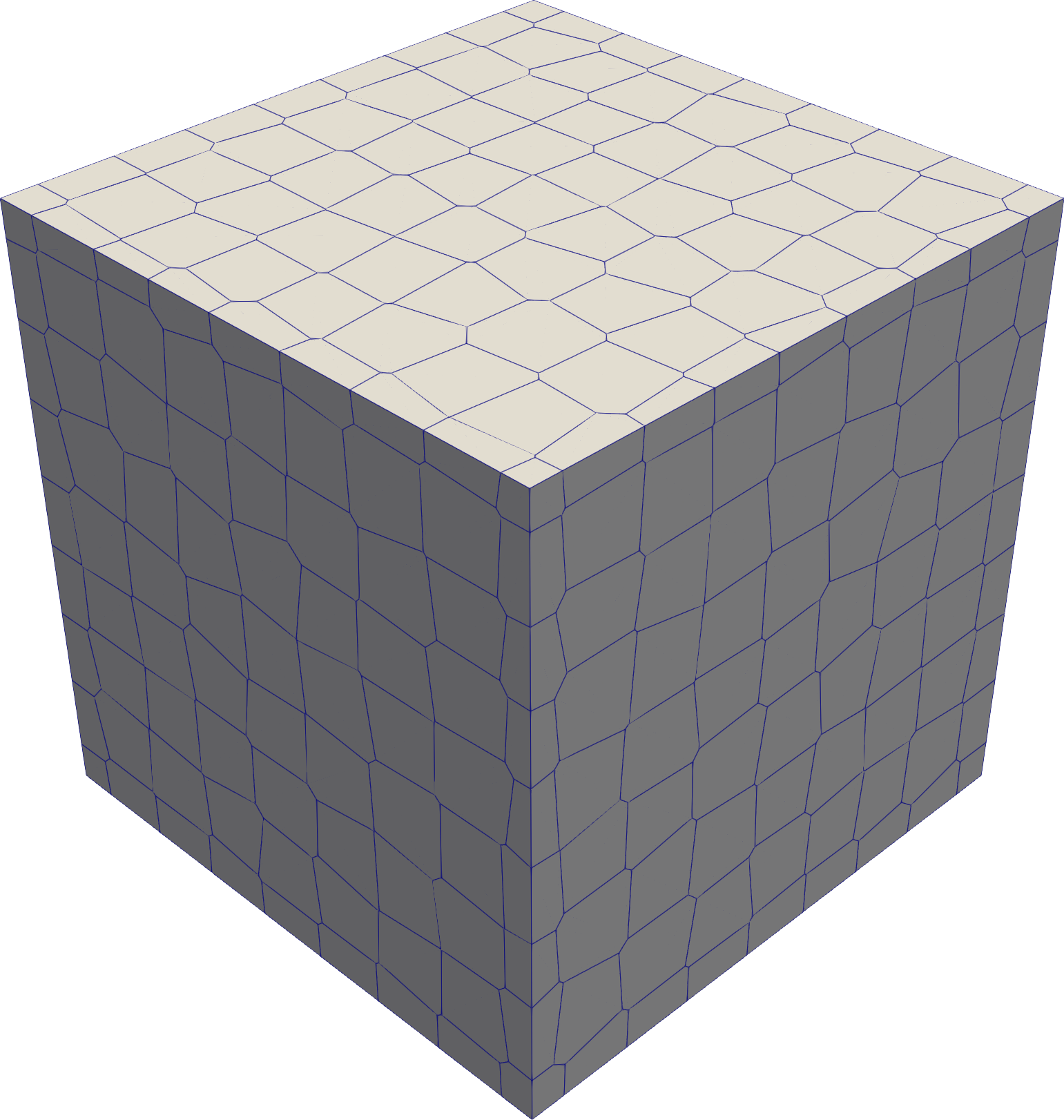}
    \caption{Polyhedral mesh family}
  \end{subfigure}
  \caption{Approximation error $\tnorm{h}{\Ih{u}-\underline{u}_h}$ v. meshsize $h$ for the test described in Section~\ref{subsec:num-scalar_DAR}. Convergence slopes are displayed for two different mesh families.
    Specimens of the two mesh families with the smallest diameter
    are represented on the right.}\label{fig:trend_scalar_DAR}
\end{figure}

\subsection{Vector advection-diffusion-reaction}\label{subsec:num-vector_DAR}

We consider problem~\eqref{eq:vector_DAR} with
\[
\Omega = (0,1)^3,\qquad
\Gamma_\dirichlet = \partial \Omega,\qquad
\varepsilon = 1,\qquad
\beta = \begin{bmatrix}1 & 1 & 1
\end{bmatrix}^\trans,\qquad
\gamma=1.
\]
The source term $f$ and the Dirichlet boundary datum are set so that the solution corresponds to the following vector potential.
\[
p = \begin{bmatrix}
  \sin(\pi z) &
  \sin(\pi x) &
  \sin(\pi y)
\end{bmatrix}^\trans.
\]
The penalty fields ared designed as in Remark~\ref{rmk:penalty-fields}.
The test is performed considering the same refined mesh families as in the previous points, respectively composed of simplicial and general polyhedral elements.
The trend of the error $\tnorm{h}{\Ih{u}-\underline{u}_h}$ with respect to the mesh size $h$ is recorded for several values of the degree $k$.
The results are summarized in Figure~\ref{fig:trend_vector_DAR}.
As for the scalar case, the observed convergence rates match or exceed the predicted ones, with superconvergence in $h^{k+1}$ for the simplicial mesh family.
The explanation of this phenomenon is likely the same as for the scalar case.

\begin{figure}
  \begin{subfigure}[t]{0.49\textwidth}\centering
    \begin{tikzpicture}
      \begin{loglogaxis}[width=\linewidth]
\addplot table[x=meshsize,y=H1TypeError] {vector_DAR_tet-data_rates-0.dat} node[font = {\small}, pos = 0.166666666666667, above=2pt]{0.96}node[font = {\small}, pos = 0.5, above=2pt]{1.03}node[font = {\small}, pos = 0.833333333333333, above=2pt]{1.00};
\addplot table[x=meshsize,y=H1TypeError] {vector_DAR_tet-data_rates-1.dat} node[font = {\small}, pos = 0.166666666666667, above=2pt]{2.18}node[font = {\small}, pos = 0.5, above=2pt]{1.68}node[font = {\small}, pos = 0.833333333333333, above=2pt]{2.16};
\addplot table[x=meshsize,y=H1TypeError] {vector_DAR_tet-data_rates-2.dat} node[font = {\small}, pos = 0.166666666666667, above=2pt]{2.98}node[font = {\small}, pos = 0.5, above=2pt]{2.81}node[font = {\small}, pos = 0.833333333333333, above=2pt]{3.04};
\addplot table[x=meshsize,y=H1TypeError] {vector_DAR_tet-data_rates-3.dat} node[font = {\small}, pos = 0.25, above=2pt]{4.34}node[font = {\small}, pos = 0.75, above=2pt]{3.70};
\logLogSlopeTriangle{0.90}{0.4}{0.1}{0}{black};
\logLogSlopeTriangle{0.90}{0.4}{0.1}{1.5}{black};
\logLogSlopeTriangle{0.90}{0.4}{0.1}{3.5}{black};
      \end{loglogaxis}
    \end{tikzpicture}
    \caption{Simplicial mesh family}
  \end{subfigure}
  \begin{subfigure}[t]{0.49\textwidth}\centering
    \begin{tikzpicture}
      \begin{loglogaxis}[width=\linewidth]
\addplot table[x=meshsize,y=H1TypeError] {vector_DAR_voro-data_rates-0.dat} node[font = {\small}, pos = 0.166666666666667, above=2pt]{0.43}node[font = {\small}, pos = 0.5, above=2pt]{0.46}node[font = {\small}, pos = 0.833333333333333, above=2pt]{0.44};
\addplot table[x=meshsize,y=H1TypeError] {vector_DAR_voro-data_rates-1.dat} node[font = {\small}, pos = 0.166666666666667, above=2pt]{1.33}node[font = {\small}, pos = 0.5, above=2pt]{1.83}node[font = {\small}, pos = 0.833333333333333, above=2pt]{1.93};
\addplot table[x=meshsize,y=H1TypeError] {vector_DAR_voro-data_rates-2.dat} node[font = {\small}, pos = 0.166666666666667, above=2pt]{2.77}node[font = {\small}, pos = 0.5, above=2pt]{2.34}node[font = {\small}, pos = 0.833333333333333, above=2pt]{2.85};
\addplot table[x=meshsize,y=H1TypeError] {vector_DAR_voro-data_rates-3.dat} node[font = {\small}, pos = 0.166666666666667, above=2pt]{3.00}node[font = {\small}, pos = 0.5, above=2pt]{4.06}node[font = {\small}, pos = 0.833333333333333, above=2pt]{3.78};
\logLogSlopeTriangle{0.90}{0.4}{0.1}{0}{black};
\logLogSlopeTriangle{0.90}{0.4}{0.1}{1.5}{black};
\logLogSlopeTriangle{0.90}{0.4}{0.1}{2.5}{black}

      \end{loglogaxis}
    \end{tikzpicture}
    \caption{Polyhedral mesh family}
  \end{subfigure}
  \caption{Approximation error $\tnorm{h}{\Ih{u}-\underline{u}_h}$ v. meshsize $h$ for the test described in Section~\ref{subsec:num-vector_DAR}. Convergence slopes are displayed for the mesh families depicted in Figure~\ref{fig:trend_scalar_DAR}.}\label{fig:trend_vector_DAR}
\end{figure}
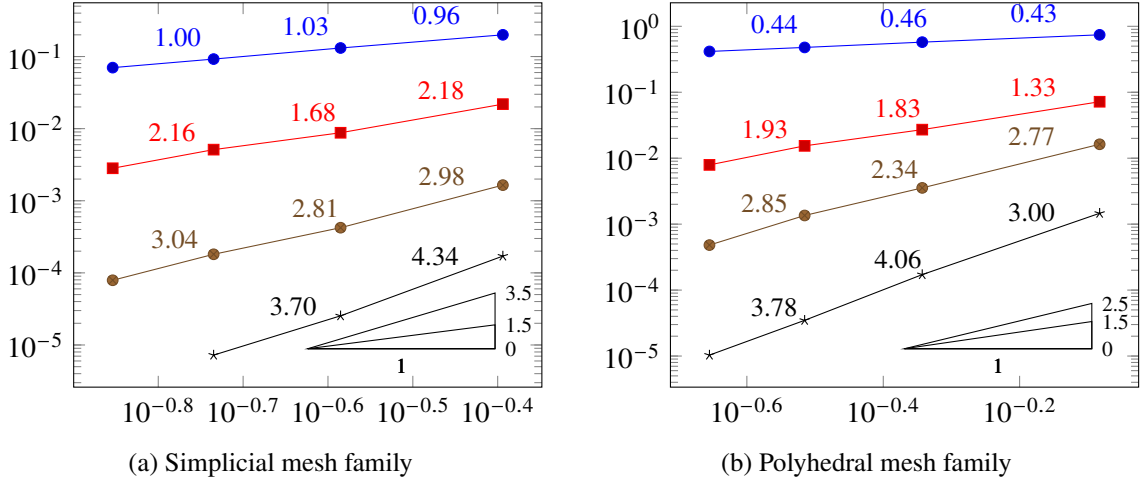

\subsection{Diffusion-advection of the magnetic field in a rotating cylinder}\label{subsec:magn.field.diff.conv}

In the context of magnetohydrodynamics, the stationary magnetic field $B$ in a medium with uniform conductivity $\sigma > 0$ and magnetic permeability $\mu > 0$ advected by a velocity field $\beta$ such that $\nabla \cdot \beta = 0$ can be described by the (steady) induction equation:
\begin{equation}\label{eq:ind.eq}
  \nabla \times (\beta \times B)  + \frac{1}{\sigma \mu} \nabla^2 B = 0.
\end{equation}
For a derivation of~\eqref{eq:ind.eq} starting from Maxwell equations, we refer to~\cite[Chapter 1, Section II]{Moreau:13}.
Assuming sufficient regularity for the following manipulations to make sense, we recast the induction equation into the framework described in Section~\ref{subsubsec:vector-dar} using the identities:
\[
\begin{aligned}
  \nabla^2 B &= - \nabla \times (\nabla \times B) + \nabla (\nabla \cdot B),\\
  \nabla \times (\beta \times B) &= (B\cdot\nabla)\beta - (\beta\cdot\nabla) + (\nabla\cdot B) \beta - (\nabla\cdot \beta)B.
\end{aligned}
\]
Additionally recalling that $\nabla\cdot B = \nabla \cdot \beta = 0$, we get
\begin{equation}\label{eq:ind.eq.reform}
  \nabla\times\left(\frac{1}{\sigma\mu} \nabla\times B\right) + (\beta\cdot\nabla)B - (B\cdot\nabla)\beta = 0.
\end{equation}
Letting
\[
p = B, \qquad \varepsilon = \frac{1}{\sigma\mu},\qquad \gamma=0, \qquad f_p = 0,
\]
\eqref{eq:ind.eq.reform} can be cast in the setting~\eqref{eq:vector_DAR.as.fried} by modifying the field $\mathcal{K}$ as follows:
\[
\mathcal{K} \coloneqq
\begin{bmatrix}
  \varepsilon^{-1} \Id{3} & \mathcal{O}_{3,3} \\
  \mathcal{O}_{3,3} & -\nabla \beta
\end{bmatrix}.
\]
In this context, $b=\sigma^{-1} J$, where $J$ has the meaning of current density.

An experimental configuration for which the magnetic field can be described in terms of the induction equation~\eqref{eq:ind.eq} is presented in~\cite{Perry.Jones:78}. In this work, the authors consider an infinitely long cylindrical rod of conducting material surrounded by void.
The rod is placed in a region with a uniform magnetic field parallel to te section of the cylinder and is put in circular motion around its axis with a constant angular velocity.
The magnetic field tends to be expelled from the rotating region (see~\cite[Figure 2]{Perry.Jones:78} and also, for a similar configuration,~\cite[Figure II.7]{Moreau:13}).
The expulsion of the magnetic field outside the circulating region is more pronounced for higher values of magnetic Reynolds number $\Rm \coloneqq \sigma\mu \norm{L^\infty(\Omega)^d}{\beta} L$, ($L$ being a characteristic length of the system), corresponding to advection-dominated regimes.

To simulate the experimental setting, we consider a cubic domain $\Omega=\left(-\frac{L}{2},\frac{L}{2}\right)^3$ centered in the origin.
The axis of the cylinder is parallel to the $z$ axis and passes through the center of the domain.
Let $R$ be the radius of the cylinder, $\omega$ be the angular velocity and $k$ a user-dependent parameter.
Introducing cylindrical coordinates $(r,\theta,z)$ such that $x=r \cos \theta$, $y=r\sin\theta$, $z=z$ and the associated covariant basis $(\hat{e}_r, \hat{e}_\theta, \hat{e}_z)$, the velocity field is defined as
\[
\beta = \omega r \phi(r) \hat{e}_\theta, \qquad \phi(r) = \frac{1}{1+ e^{\lambda(r-R)}}.
\]
The sigmoid function $\phi$ is a smooth approximation of a step function such that $\phi \approx 1$ for $r<R$ and $\phi(r)\approx 0$ for $r>R$.
When $\lambda$ is chosen large enough, the velocity field reproduces the rigid rotation of the cylinder immersed in a void region.
The problem is solved with $\Gamma_D=\partial\Omega$, and the boundary data is set so that the solution matches the uniform field $B_0 = \hat{e}_x$ along the boundary. The numerical tests are performed setting $\mu\sigma=1$ and varying the magnitude of $\Rm$ through $\omega$.
The results displayed in Figure \ref{fig:expulsion-b-field} capture the expected behaviour: as the magnetic Reynolds number increases from $0$ (unperturbed configuration) to $5 \cdot 10^{-1}$, one observes that the fields line tend to avoid the region occupied by the cilinder.
This effect becomes more and more visible as $\Rm$ increases.

\begin{figure}\centering
  \begin{subfigure}{0.40\textwidth}
    \centering
    \includegraphics[width=\textwidth]{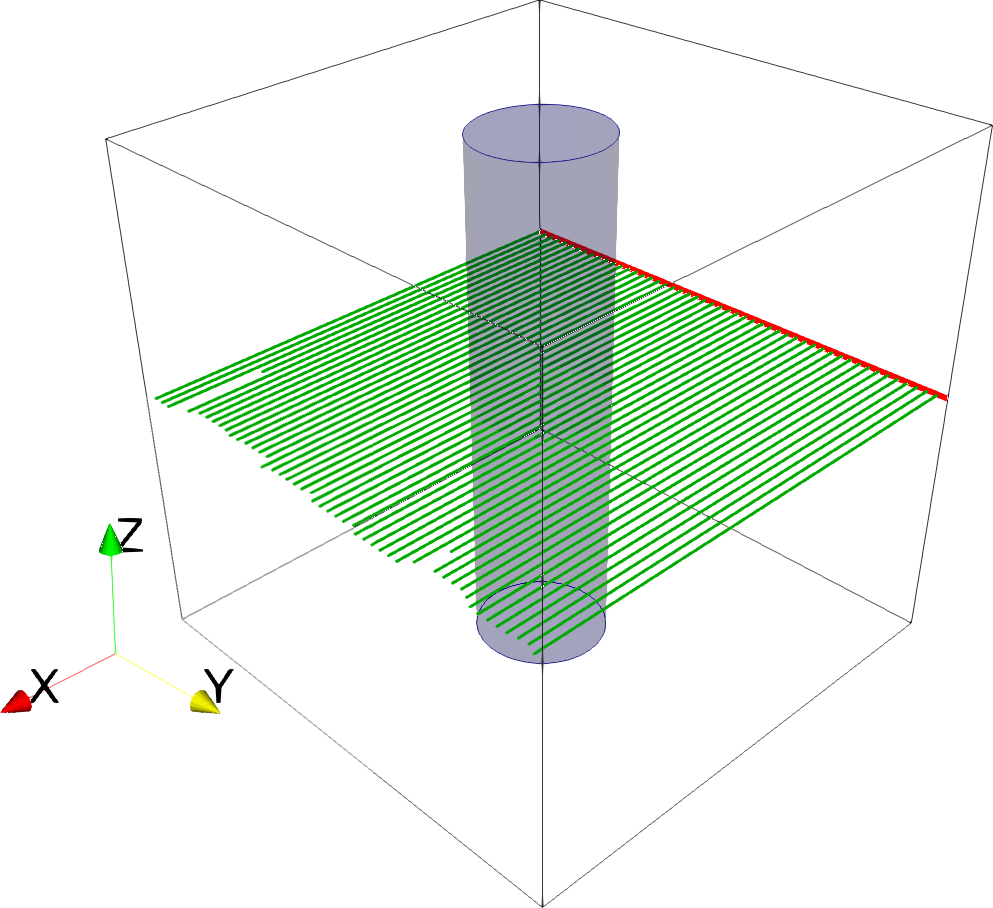}
    \caption{$\Rm = 0$}
  \end{subfigure}
  \begin{subfigure}{0.40\textwidth}
    \centering
    \includegraphics[width=\textwidth]{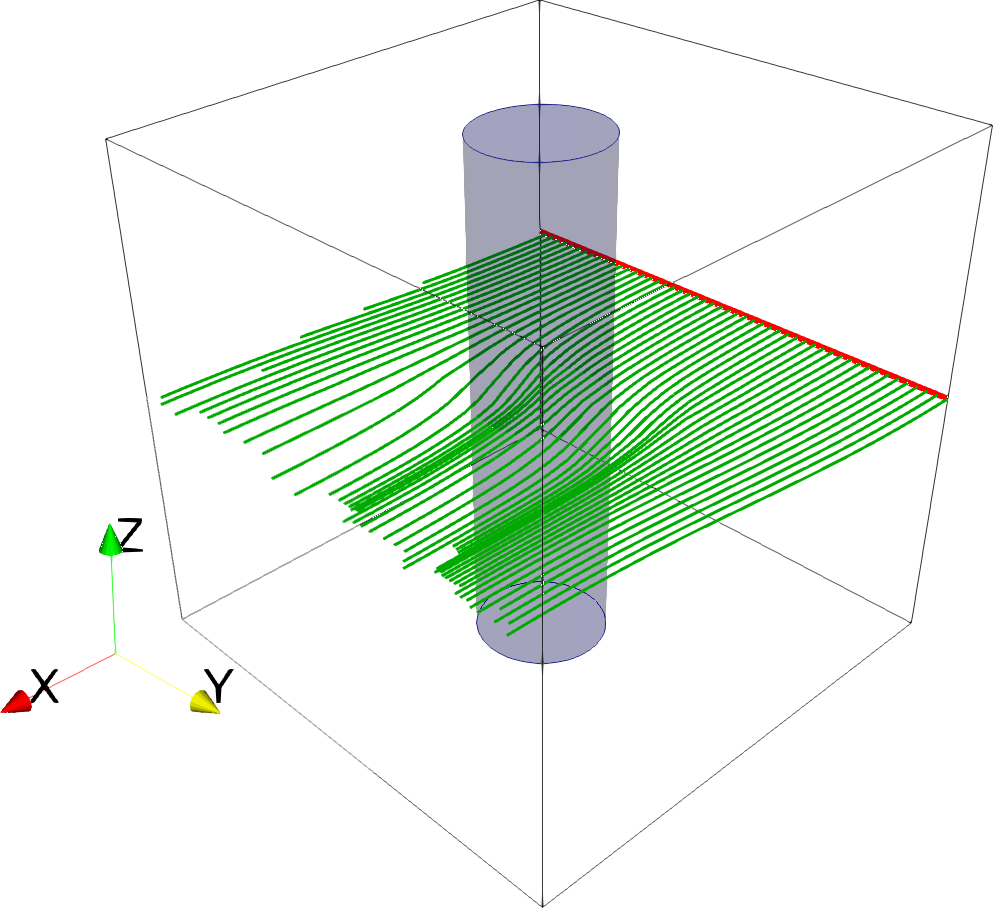}
    \caption{$\Rm = 10^{-1}$}
  \end{subfigure}
  \\
  \begin{subfigure}{0.40\textwidth}
    \centering
    \includegraphics[width=\textwidth]{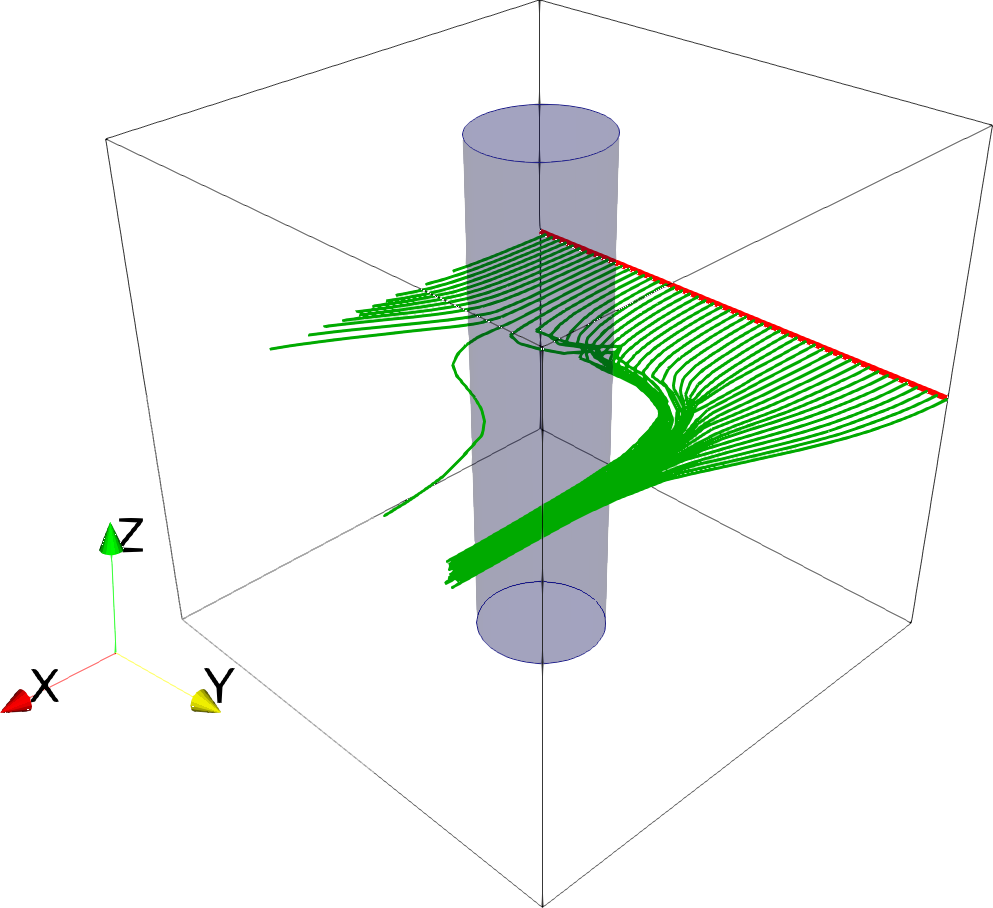}
    \caption{$\Rm = 5\cdot10^{-1}$}
  \end{subfigure}
  \caption{Numerical results for the test described in Section~\ref{subsec:magn.field.diff.conv}.
    The profile of the rotating cylinder is displayed in blue at the center. The numerical solution is shown
    by displaying the field lines of the magnetic field $B$ stemming from points distributed along the line marked in red.
    When the cylinder is still ($\Rm=0$), the magnetic field is equal to the uniform external field $B_0$.
    When the cylinder moves, the line fields are pushed outside of the cylinder. The expulsion effect is larger
    for larger values of $\Rm$.
  }\label{fig:expulsion-b-field}

\end{figure}


\section*{Acknowledgements}

Funded by the European Union (ERC Synergy, NEMESIS, project number 101115663).
Views and opinions expressed are however those of the authors only and do not necessarily reflect those of the European Union or the European Research Council Executive Agency. Neither the European Union nor the granting authority can be held responsible for them.


\printbibliography

\end{document}